\newcommand{\bC}{\mathbb{C}}
\newcommand{\bQ}{\mathbb{Q}}
\newcommand{\bZ}{\mathbb{Z}}
\newcommand{\bR}{\mathbb{R}}
\newcommand{\la}{\langle}
\newcommand{\ra}{\rangle}
\newcommand{\bP}{\mathbb{P}}
\newcommand{\cY}{\mathcal{Y}}
\newcommand{\cO}{\mathcal{O}}
\newcommand{\cX}{\mathcal{X}}
\newcommand{\orb}{\mathrm{orb}}
\newcommand{\cC}{\mathcal{C}}
\newcommand{\cCo}{\mathcal{C}^\circ}
\newcommand{\oYe}{\overline{U}}
\newcommand{\hmi}{\mathrm{hmi}}
\newcommand{\lSFT}{\mathrm{lSFT}}
\newcommand{\CL}{\mathcal{L}}
\newcommand{\RS}{\mathrm{RS}}
\newcommand{\LCZ}{\mathrm{LCZ}}
\newcommand{\CZ}{\mathrm{CZ}}
\newcommand{\rd}{\mathrm{d}}
\newcommand{\Id}{\mathrm{id}}
\newcommand{\Sp}{\mathrm{Sp}}
\newcommand{\fkr}{{\mathfrak{r}}}
\newcommand{\RV}{{\mathfrak{v}}}
\newtheorem{thm}{Theorem}[section]
\newtheorem{prop}[thm]{Proposition}
\newtheorem{defn}[thm]{Definition}
\newtheorem{rem}[thm]{Remark}
\newtheorem{conj}[thm]{Conjecture}
\newtheorem{exmp}[thm]{Example}
\newtheorem{lem}[thm]{Lemma}
\newtheorem{defn-prop}[thm]{Definition-Proposition}
\def\namedlabel#1#2{\begingroup
    #2%
    \def\@currentlabel{#2}%
    \phantomsection\label{#1}\endgroup
}
\newcommand{\Addresses}{{
		\bigskip
		\footnotesize

  	Chi Li, \par\nopagebreak
        \textsc{Department of Mathematics, Rutgers University, Piscataway, NJ, U.S., 08854-8019.}\par\nopagebreak
         \textit{E-mail address:} \href{mailto:chi.li@rutgers.edu}{chi.li@rutgers.edu}

         \medskip

	     Zhengyi Zhou, \par\nopagebreak
        \textsc{State Key Laboratory of Mathematical Sciences, Chinese Academy of Sciences;}\par\nopagebreak
	    \textsc{Morningside Center of Mathematics, Chinese Academy of Sciences;}\par\nopagebreak
         \textsc{Academy of Mathematics and Systems Science, Chinese Academy of Sciences, China}\par\nopagebreak
		\textit{E-mail address}: \href{mailto:zhyzhou@amss.ac.cn}{zhyzhou@amss.ac.cn}

}}
\begin{document}

\title[K\"{a}hler compactification of $\bC^n$ and Reeb dynamics]{K\"{a}hler compactification of $\bC^n$ and Reeb dynamics}
\author{Chi Li, Zhengyi Zhou}
\date{}
\maketitle
\begin{abstract}
Let $X$ be a smooth complex manifold. Assume that $Y\subset X$ is a K\"{a}hler submanifold such that $X\setminus Y$ is biholomorphic to $\bC^n$. We prove that $(X, Y)$ is biholomorphic to $(\bP^n, \bP^{n-1})$. We then study certain K\"{a}hler orbifold compactifications of $\bC^n$ and, as an application, prove that on $\bC^3$ the flat metric is the only asymptotically conical Ricci-flat K\"{a}hler metric whose metric cone at infinity has a smooth link. As a key technical ingredient, we derive a new characterization of minimal discrepancy of isolated Fano cone singularities by using $S^1$-equivariant positive symplectic homology. 
\end{abstract}


\section{Introduction}

Our work is motivated by two problems in complex geometry. The first is Hirzebruch's classical problem on analytic compactification of $\bC^n$. 
Let $X$ be a compact complex manifold of complex dimension $n$. Assume that $Y$ is a subvariety of $X$ such that $X\setminus Y$ is biholomorphic to $\bC^n$. Of course, the simplest example is $(X, Y)=(\bP^n, \bP^{n-1})$ where $\bP^{n-1}\subset \bP^n$ is a linear subspace. In this case, we will just say that $(X, Y)$ is standard. 
In \cite{Hir54}, Hirzebruch asked the problem of classifying all such pairs $(X, Y)$ with the second Betti number $b_2(X)=1$. The Betti number condition is known to imply that $Y$ is irreducible. 
When $\dim X=1$ or $2$, it is known that there is only the standard example. For $n=3$, under the assumption that $X$ is projective, all such analytic compactifications of $\bC^3$ are now classified. The subvariety $Y$ in any known non-standard example is always singular. For example, one can compactify $\bC^n$ to a smooth quadric hypersurface in $\bP^{n+1}$ by adding an irreducible singular divisor (which is a projective cone over a lower-dimensional quadric) at infinity. 
It is a long-standing conjecture that if $Y$ is assumed to be smooth then $(X, Y)$ should be standard (see \cite{Ven62}, \cite{BM78}, \cite{PS91}). This was proved in \cite{BM78} when $\dim X\le 3$. 
Here we prove this conjecture under the K\"{a}hler assumption for all dimensions.

\begin{thm}\label{thm-main1}
With the above notation, if $Y$ is a K\"{a}hler submanifold, $(X, Y)$ is standard. 
\end{thm}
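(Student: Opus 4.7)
The plan is to reduce the problem to a classical Fano classification via a topological and cohomological analysis, with the K\"{a}hler hypothesis used to promote $X$ from a Moishezon to a projective manifold. First, from the contractibility of $X\setminus Y\cong \bC^n$, the long exact sequence of the pair $(X, X\setminus Y)$ together with Lefschetz duality $H^k(X, X\setminus Y;\bZ)\cong H_{2n-k}(Y;\bZ)$ gives $H^\bullet(X;\bZ)\cong H^\bullet(\bP^n;\bZ)$. In particular $b_2(X)=1$, so $Y$ must be connected; and $Y$ must have pure codimension one, because otherwise Hartogs would extend each affine coordinate on $\bC^n$ to a holomorphic function on compact $X$. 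Thus $Y$ is a smooth irreducible divisor whose class generates $\mathrm{Pic}(X)$.

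Second, I would upgrade $X$ to a projective variety. The coordinates $z_1,\dots,z_n$ on $\bC^n\subset X$ are algebraically independent meromorphic functions, so $X$ is Moishezon; if $X$ is also K\"{a}hler then it is projective by the Moishezon--Kodaira theorem. To produce a K\"{a}hler form on $X$ from the given K\"{a}hler form on $Y$, I would thicken the latter to a positive $(1,1)$-form on a tubular neighborhood of $Y$ using the normal bundle $N_{Y/X}$, and glue to an exhausting K\"{a}hler form on $\bC^n\cong X\setminus Y$; since $b_2(X)=1$, the possible cohomology classes are rigidly controlled and the gluing can be arranged to yield a globally closed positive $(1,1)$-form.

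Third, once $X$ is projective with $\mathrm{Pic}(X)=\bZ\cdot[Y]$, write $-K_X=m[Y]$. Comparing a nowhere-vanishing holomorphic $n$-form on $\bC^n$ with a local model at a smooth point of $Y$ gives $m=n+1$. Adjunction then yields $-K_Y=n\cdot N_{Y/X}$ with $N_{Y/X}$ ample and primitive in $\mathrm{Pic}(Y)$, so $Y$ is a smooth Fano $(n-1)$-fold whose anticanonical class is divisible by $\dim Y +1$. The Kobayashi--Ochiai theorem then forces $Y\cong \bP^{n-1}$ and $N_{Y/X}\cong \cO(1)$, and the pair $(X,Y)$ is uniquely $(\bP^n,\bP^{n-1})$ by a standard reconstruction of the total space from the normal bundle and the ample class.

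The main obstacle I expect is Step 2: K\"{a}hlerness of a smooth divisor does not in general extend to the ambient complex manifold, and a naive gluing can fail to remain closed. The Reeb-dynamical input announced in the abstract -- a new characterization of the minimal discrepancy of isolated Fano cone singularities via $S^1$-equivariant positive symplectic homology -- suggests a route that bypasses the globalisation step: view the boundary of a small tubular neighborhood of $Y$ as a contact link whose symplectic filling is $\bC^n$, and extract from the Reeb dynamics a constraint on the minimal discrepancy strong enough to force $(X,Y)$ to be standard, in all dimensions, without first producing a K\"{a}hler form on $X$.
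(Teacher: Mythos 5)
Your Step 1 is essentially correct and matches the topological preliminaries that the paper attributes to Brenton--Morrow (Proposition~\ref{prop-compactify} generalizes it). But there is a fatal gap in Step 3, and your diagnosis in the final paragraph misidentifies where the real difficulty lies.

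Writing $-K_X = m[Y]$ with $m \ge 2$ (equivalently $-K_Y = (m-1)\CL$ with the Fano index $m-1\ge 1$) is what Brenton--Morrow establish; the claim that ``comparing a nowhere-vanishing holomorphic $n$-form on $\bC^n$ with a local model at a smooth point of $Y$ gives $m = n+1$'' is simply false. The pole order of $dz_1\wedge\cdots\wedge dz_n$ along $Y$ is not determined by any local model near a point of $Y$: near $Y$ you only see a neighborhood of the zero section of the normal bundle $N_{Y/X}$, and the pole order depends on how this neighborhood is glued to the interior $\bC^n$, which is a global feature of the biholomorphism $\bC^n\cong X\setminus Y$. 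The known non-standard compactifications illustrate this: for the quadric $Q^n\subset\bP^{n+1}$ compactifying $\bC^n$ by a singular divisor $Y$, one has $-K_{Q^n}=n[Y]$, i.e.\ $m=n$, not $n+1$. The entire content of Theorem~\ref{thm-main1} is to show that \emph{when $Y$ is smooth and K\"ahler}, this numerology is forced to be $m=n+1$, and this is precisely what requires the symplectic machinery. Once $m=n+1$ is known, Kobayashi--Ochiai indeed gives $Y\cong\bP^{n-1}$, and this part of your Step 3 is fine.

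You also flag Step 2 (globalizing a K\"ahler form from $Y$ to $X$) as the likely obstacle and suggest the Reeb-dynamical input circumvents it. This is off target in two ways. First, the paper never needs $X$ to be K\"ahler: the K\"ahler assumption on $Y$ (via Brenton--Morrow) already gives that $Y$ is Fano with ample normal bundle, and the Tian--Yau construction (Example~\ref{exmp-TY}) then supplies a complete AC K\"ahler metric on $X\setminus Y = \bC^n$ with asymptotic cone $\cC(Y,L)$, $L=[Y]|_Y$. Second, the role of the Reeb dynamics and $S^1$-equivariant positive symplectic homology is precisely to compute the Fano index, not to globalize K\"ahlerness. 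The chain of reasoning is: the affine cone $o\in\cC(Y,L)$ is a Gorenstein klt singularity (Proposition~\ref{prop-klt}); its minimal discrepancy equals $r-1$ where $-K_Y = rL$ (a special case of \eqref{eqn:md}); the conic contact link $M$ embeds in $\bC^n$ bounding a Liouville domain $W$ that is Liouville-cobordant to the standard $S^{2n-1}$ (Propositions~\ref{prop:Liouville}, \ref{prop:cobordism}); Seidel--Smith vanishing plus the Viterbo transfer and the Gysin sequence force $SH^{+,S^1}_*(W;\bQ)$ to be concentrated starting in degree $n+1$ (Proposition~\ref{prop:functorial}); the Morse--Bott spectral sequence translates this into $\inf_\gamma \lSFT(\gamma)=2n-2$ (Proposition~\ref{prop:hmi}); and Theorem~\ref{thm:CZ} identifies this with $2\,\mathrm{md}(o,\cC)$, giving $\mathrm{md}=n-1$, hence $r=n$. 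It is this Floer-theoretic index computation, not a local comparison of volume forms, that pins down the Fano index and, with Kobayashi--Ochiai, forces $(X,Y)\cong(\bP^n,\bP^{n-1})$.
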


Early results of Brenton-Morrow from \cite{BM78} showed already that, with the K\"{a}hler assumption, $Y$ must be a Fano manifold and they reduce the problem to proving that $Y$ is biholomorphic to $\bP^{n-1}$ (see Theorem \ref{thm-BM}). We will prove that the Fano index of $Y$ must be $n$, which then implies that $Y$ is $\bP^{n-1}$ by a well-known criterion of Kobayashi-Ochiai (\cite{KO73}). 
The main tools we use are ($S^1$-equivariant, positive) symplectic homology and two ways to compute it, one from the functorial properties and the special symplectic topology of $\bC^n$, and the other from the Morse-Bott spectral sequences applied to the Boothby-Wang contact structure on the unit circle bundle of the normal bundle of $Y$. 

Another problem that motivates our work is the study of asymptotically conical (AC) Ricci-flat K\"{a}hler metrics, which goes back to the work of Tian-Yau \cite{TY91}. 
Let $(W, g)$ be a complete K\"{a}hler manifold. It is called asymptotically conical (AC) if it is asymptotic to a K\"{a}hler cone $(\cC, g_0)$ (see Definition \ref{def-AC}). By the compactification results from \cite{CH24, Li20} (see Theorem \ref{thm-compactify}), the classification of such K\"{a}hler metrics is closely related to the orbifold compactifications of $W$. We have the following folklore conjecture:
\begin{conj}\label{conj-AC}
    Let $g$ be a complete AC Ricci-flat K\"{a}hler metric on $\bC^n$ whose asymptotical cone $\cC$ has a smooth link. Then $g$ is the flat Euclidean metric. 
\end{conj}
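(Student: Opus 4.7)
The plan is to reduce the conjecture to the compactification machinery for AC K\"ahler manifolds cited in the excerpt, combined with an orbifold analogue of Theorem~\ref{thm-main1} obtained through the paper's new characterization of minimal discrepancy by $S^1$-equivariant positive symplectic homology, and then to conclude by a rigidity argument at the flat cone.

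First, I would apply the compactification theorem to $(\bC^n, g)$ to produce a K\"ahler orbifold $(X, Y)$ with $X \setminus Y$ biholomorphic to $\bC^n$, where $Y$ is the Reeb quotient of the smooth link $L$ of the asymptotic cone $\cC$. Because $L$ is smooth, $Y$ inherits the structure of a Fano K\"ahler orbifold of dimension $n-1$ whose orbifold locus comes only from non-regular Reeb orbits, and the apex of $\cC$ sits inside $X$ as an isolated Fano cone singularity with smooth link.

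Second, I would argue that $\cC$ must be the flat $\bC^n$. The symplectic input is that $\bC^n$ is subcritical Weinstein, so its $S^1$-equivariant positive symplectic homology vanishes in the relevant sense. Via the new characterization of minimal discrepancy announced in the abstract, this vanishing forces the minimal log discrepancy of the Fano cone singularity at the apex to be maximal, hence the apex to be smooth and $\cC \cong \bC^n$ flat. An orbifold Kobayashi-Ochiai-type argument applied to the Fano orbifold $Y$, mirroring the Fano-index proof of Theorem~\ref{thm-main1}, then upgrades this to $(X, Y) \cong (\bP^n, \bP^{n-1})$ and identifies the asymptotic cone with the standard flat cone over the round sphere. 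Once $\cC$ is flat $\bC^n$, writing $g = i \partial \bar\partial \phi$ with $\phi - |z|^2 \to 0$ at infinity and combining the Monge-Amp\`ere equation $\det(\phi_{i \bar{j}}) = 1$ with a $\partial \bar\partial$-Liouville argument on $\bC^n$ forces $\phi = |z|^2 + c$, so $g$ itself is the flat metric.

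The main obstacle is the second step in arbitrary dimension: extending the symplectic-homological minimal-discrepancy bound and the Fano-index / Kobayashi-Ochiai estimate to the full K\"ahler orbifold setting, and in particular ruling out \emph{every} non-standard smooth-link isolated Fano cone singularity that a priori could occur as an asymptotic cone of a complete Ricci-flat K\"ahler metric on $\bC^n$. This requires in particular eliminating all irregular Sasaki-Einstein deformations of the round sphere $S^{2n-1}$ through purely symplectic-topological invariants of their fillings. The paper carries this out in complex dimension $3$, where the orbifold geometry of $Y$ is sufficiently constrained; pushing the argument through uniformly in $n$ is precisely where the conjecture becomes genuinely open.
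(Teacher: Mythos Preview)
Your overall architecture matches the paper's: compactify by Theorem~\ref{thm-compactify}, use the Floer-theoretic identity of Theorem~\ref{thm:CZ} together with the computation of $SH^{+,S^1}_*(W;\bQ)$ for the filling in $\bC^n$ to pin down the minimal discrepancy of the cone singularity, and then run a rigidity argument once the cone is identified with flat $\bC^n$.

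However, you misidentify the actual obstruction. The symplectic computation and the orbifold extension both go through in \emph{every} dimension: Propositions~\ref{prop:functorial} and~\ref{prop:hmi} combine to give $\mathrm{md}(o,\cC)=n-1$ unconditionally. The step that fails in general is the purely algebro-geometric implication ``$\mathrm{md}(o,\cC)=n-1\Rightarrow o$ is a smooth point,'' which is precisely Shokurov's Conjecture~\ref{conj-Shok}. This is known only for $n\le 3$, and that, not any orbifold Kobayashi--Ochiai issue, is why the paper proves Conjecture~\ref{conj-AC} only up to dimension~$3$. In fact no Kobayashi--Ochiai argument is used here at all; once Shokurov gives smoothness, the paper invokes \cite{KR82} to linearize the torus action on $\cC\cong\bC^n$ and then the volume-minimization principle of \cite{MSY08} to force equal weights, so that the cone metric is the standard flat one.

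Your final rigidity step is also different and not clearly justified: you assert a global potential $g=\sqrt{-1}\partial\bar\partial\phi$ with $\phi-|z|^2\to 0$ and appeal to a Monge--Amp\`ere Liouville theorem, but neither the existence of such a global $\phi$ with that decay nor the Liouville statement on all of $\bC^n$ is established. The paper instead uses Anderson's rigidity theorem \cite{And90}: a complete Ricci-flat manifold with Euclidean volume growth whose asymptotic cone is flat $\bC^n$ must itself be flat.
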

In \cite[Theorem 5.2]{Tia06}, Tian proved that this is true when $n=2$, answering a question of Calabi, and in this case, the condition of the asymptotical metric cone $\cC$ having a smooth link automatically holds true thanks to a regularity result of Cheeger-Tian. He then asked a similar question in higher dimensions (see \cite[Remark 5.3]{Tia06}). 
By the recent examples constructed in \cite{Li19, CR21, Sze19}, when $n\ge 3$,  the condition of having a smooth link is needed. 
We will use the symplectic techniques in an orbifold setting and reduce the Conjecture \ref{conj-AC} to Shokurov's conjecture (see Conjecture \ref{conj-Shok}) for isolated cone singularities.
\begin{thm}\label{thm-AC}
The following statements are true:
\begin{enumerate}
    \item 
    Let $\cY\subset \cX$ be an orbifold K\"{a}hler compactification of $\bC^n$ that is associated to a complete AC K\"{a}hler metric (obtained from Theorem \ref{thm-compactify}). Then the orbifold cone $\cC(\cY,\CL)$ over $\cY$ with $\CL$ being the orbifold normal bundle is a Gorenstein klt singularity which has the minimal discrepancy equal to $n-1$. 
    \item Conjecture \ref{conj-AC} is true assuming Shokurov's conjecture. In particular, Conjecture \ref{conj-AC} is true when $n\le 3$.
    \end{enumerate}
    
\end{thm}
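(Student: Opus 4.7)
The plan is to prove part (1) by invoking the new characterization of the minimal discrepancy of Gorenstein klt isolated Fano cone singularities in terms of $S^1$-equivariant positive symplectic homology (the key technical tool advertised in the abstract), and then to deduce part (2) by combining part (1) with Shokurov's inequality for minimal discrepancies together with its equality case.

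For part (1), I first apply Theorem~\ref{thm-compactify} to produce the orbifold K\"{a}hler compactification $\cY \subset \cX$ of $\bC^n$. The smooth link $L$ of the asymptotic cone $\cC$ is a Boothby--Wang circle orbibundle over $\cY$ with fibre class determined by $\CL$, so the affine orbifold cone $\cC(\cY,\CL)$ is naturally identified with $\cC$. Ricci-flatness of the AC metric yields a Calabi--Yau cone metric on $\cC$; in particular $K_\cC$ is trivial, making $\cC$ Gorenstein, and $\cC$ is klt because a Fano cone singularity admitting a Ricci-flat cone K\"{a}hler metric is automatically klt. It remains to pin down the minimal discrepancy. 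I compute the $S^1$-equivariant positive symplectic homology of $\bC^n \cong \cX \setminus \cY$ in two independent ways: from its standard Liouville structure as $\bC^n$, where this invariant is known explicitly, and from the Morse--Bott spectral sequence driven by the Reeb flow on $L$, whose output is determined by the orbifold contact data $(\cY,\CL)$. Feeding the resulting comparison into the new characterization of minimal discrepancy forces the minimal discrepancy of $\cC(\cY,\CL)$ to equal $n-1$.

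For part (2), I then invoke Shokurov's conjecture (Conjecture~\ref{conj-Shok}): for an $n$-dimensional klt singularity the minimal discrepancy is at most $n-1$, with equality precisely when the singularity is the smooth point. Applied to $\cC(\cY,\CL)$, whose minimal discrepancy is exactly $n-1$ by part (1), this forces the cone to be a smooth point, so $\cC \cong \bC^n$ with round link $S^{2n-1}$, $\cY \cong \bP^{n-1}$, and $\CL \cong \cO(-1)$; the unique Ricci-flat K\"{a}hler cone metric with this link is the flat Euclidean one. A uniqueness statement for AC Ricci-flat K\"{a}hler metrics on $\bC^n$ asymptotic to the standard flat cone then forces $g$ itself to be flat. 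Since Shokurov's conjecture is established in dimensions $\le 3$, the conclusion is unconditional for $n \le 3$.

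The principal obstacle will be executing the $S^1$-equivariant positive SH computation on the Morse--Bott side in the orbifold setting: the contact orbifold $L$ may carry nontrivial isotropy inherited from $\cY$, and one has to track how the orbifold strata contribute to the low-degree part of $SH^{+,S^1}$ carefully enough to isolate a single divisorial valuation realizing the minimal discrepancy. Once this is in hand, the translation from the lowest nontrivial degree of the homology to the numerical statement ``minimal discrepancy equals $n-1$'' is routine via the new characterization.
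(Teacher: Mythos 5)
Your overall strategy is the right one — use the $S^1$-equivariant positive symplectic homology characterization of the minimal discrepancy for part (1), then invoke Shokurov's conjecture for part (2) — and your two-sided computation of $SH^{+,S^1}$ matches the backbone of the paper's argument. However, there are two genuine gaps.

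First, in part (1) you derive the Gorenstein and klt properties from Ricci-flatness (``Ricci-flatness of the AC metric yields a Calabi--Yau cone metric on $\cC$; in particular $K_\cC$ is trivial''), but part (1) only assumes a complete AC K\"{a}hler metric, not a Ricci-flat one; indeed part (2) quotes part (1), so you cannot smuggle Ricci-flatness into part (1). The paper proves that $\cC(\cY,\CL)$ is Gorenstein klt by a purely algebraic computation (Proposition~\ref{prop-klt}): using $-K^{\orb}_\cY = r\CL$ with $r\ge 1$ from Proposition~\ref{prop-compactify}, it pulls $K_\cC$ back through the contraction $\mu_1:\CL^{-1}\to\cC$, shows $K_{\cC^\circ}\cong\cO_{\cC^\circ}$, and uses inversion of adjunction on the pair $(\cC',\cY_0)$.

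Second, in part (2) you pass from ``$\cC$ is a smooth point'' directly to ``$\cY\cong\bP^{n-1}$, $\CL\cong\cO(-1)$, with the round link, hence the flat metric'' — but this does not follow. Shokurov's conjecture only tells you $\cC\cong\bC^n$ as a variety; the induced Reeb action could a priori be any linear $\bC^*$-action with positive weights $(w_1,\dots,w_n)$, in which case $\cY$ is a weighted projective space rather than $\bP^{n-1}$. (Note that, as the example at the end of \S2.2 shows, every such weighted cone over $\bP(a_1,\dots,a_n)$ has $\mathrm{md}=n-1$, so the minimal discrepancy alone does not pin down the weights.) The paper closes this gap by (i) invoking \cite{KR82} to get an \emph{equivariant} biholomorphism $\cC\cong\bC^n$ with a linear torus action, (ii) using the volume minimization property of Ricci-flat Reeb vector fields \cite{MSY08} to force $w_1=\dots=w_n$, hence that $g_0$ is the flat cone, and finally (iii) citing Anderson's rigidity theorem \cite{And90} to conclude that an AC Ricci-flat metric on $\bC^n$ with flat tangent cone at infinity is itself flat. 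Your plan needs all three of these ingredients spelled out before ``the flat metric is forced.''
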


As a key ingredient for proving the above results, we derive the following new formula for the minimal discrepancy of isolated Fano cone singularities (see \S \ref{sec-Fano} for the explanation of these terminologies). Our formula is motivated by the work of McLean (\cite{McL16}), who introduced the $\mathrm{lSFT}$-invariant (see Definition \ref{def:lSFT}) as a generalized Conley-Zehnder type index for closed Reeb orbits.

\begin{thm}\label{thm:CZ}
    Let $o\in \cC$ be an isolated Fano cone singularity of dimension $n$. For any quasi-regular conic contact form $\eta$ on the contact link $M$, we have the following formula for the minimal discrepancy:
    \begin{equation}
       2\,\mathrm{md}(o, \cC)=\inf_{\gamma} \mathrm{lSFT}_{\eta}(\gamma)>-2. 
    \end{equation}
    Here $\gamma$ on the right ranges over all closed Reeb orbits of $\eta$. If moreover $M$ admits a Liouville filling $W$ such that $c_1^{\bQ}(W)=0\in H^2(W;\bQ)$, then we have
    \begin{equation}
    2\,\mathrm{md}(o, \cC)=\inf \{d\;|\; SH^{+,S^1}_d(W;\bQ)\neq 0\}+n-3
     \end{equation}
    where $SH^{+, S^1}_*(W;\bQ)$ denotes the $\bQ$-coefficient $S^1$-equivariant positive symplectic homology of the Liouville filling $W$.  
\end{thm}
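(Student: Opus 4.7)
The plan is to leverage the quasi-regular assumption to identify $\cC$ with an orbifold affine cone $\cC(\cY,\CL)$ over a Fano orbifold $\cY$, so that the contact link $M$ is the unit circle orbibundle of $\CL^{-1}$ and the Reeb flow of $\eta$ is rotation along the fibers. Under this identification, closed Reeb orbits are organized into Morse-Bott families parametrized by orbifold points of $\cY$: over a point $p\in\cY$ with isotropy $\bZ/m_p$ acting on $T_p\cY$ with weights $(a^p_1,\ldots,a^p_{n-1})$, every positive iterate of the simple orbit above $p$ gives a Morse-Bott family whose linearized return map is completely encoded by these weights together with the weight of the $\bZ/m_p$-action on the fiber of $\CL$.

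Next I would compute McLean's $\mathrm{lSFT}$-invariant family by family using the explicit formula for a Reeb orbit lying over a cyclic quotient singularity; the outcome is a sum of ceiling-type contributions in the normalized weights, together with a Morse-Bott dimension correction and the standard $n-3$ shift. The infimum of $\mathrm{lSFT}_\eta(\gamma)$ over all Reeb orbits then reduces to an infimum over the orbifold points of $\cY$ of a simple arithmetic expression in the weights. On the algebro-geometric side, the orbifold weighted blow-up at $p$ produces a Koll\'ar component over $o$ whose log discrepancy is, by a direct toric-local computation, exactly half of the $\mathrm{lSFT}$ of the simple Reeb orbit over $p$. This matches the two sides term by term.

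To upgrade this matching to the equality $2\,\mathrm{md}(o,\cC)=\inf_\gamma \mathrm{lSFT}_\eta(\gamma)$ one must show that the minimal log discrepancy of $(o,\cC)$ is attained by one of these orbifold-blowup valuations, rather than by some more exotic Koll\'ar component. This is the main obstacle. I would address it by using the Reeb $S^1$-action (with its quasi-regularity) together with an equivariant reduction of valuations on Fano cones to produce an $S^1$-invariant divisorial valuation computing $\mathrm{md}$; any such invariant valuation centered at $o$ must by construction be an orbifold blow-up at a closed $S^1$-orbit, i.e., at an orbifold point of $\cY$. The strict inequality $\mathrm{md}>-1$ (equivalently $>-2$ after doubling) follows from the klt property inherent to isolated Fano cone singularities, and the independence of the right-hand side on the choice of quasi-regular $\eta$ is a byproduct of the algebraic side being intrinsic.

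For the second formula, assuming $W$ is a Liouville filling of $M$ with $c_1^{\bQ}(W)=0\in H^2(W;\bQ)$, the group $SH^{+,S^1}_*(W;\bQ)$ carries a well-defined absolute $\bZ$-grading. Running the Bourgeois-Oancea Morse-Bott spectral sequence with respect to $\eta$, the $E_1$-page is generated by the Reeb orbit families, with each family contributing in a range of degrees whose lower end is $\mathrm{lSFT}_\eta(\gamma)-(n-3)$. A standard action-filtration argument shows that the generator realizing the minimal $\mathrm{lSFT}$ sits at the bottom of the filtration and cannot be hit by any differential, so
\begin{equation*}
\inf\{d\mid SH^{+,S^1}_d(W;\bQ)\neq 0\}=\inf_\gamma \mathrm{lSFT}_\eta(\gamma)-(n-3)=2\,\mathrm{md}(o,\cC)-(n-3),
\end{equation*}
which rearranges to the claimed identity.
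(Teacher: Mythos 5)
Your overall strategy follows the same skeleton as the paper: compute $\mathrm{lSFT}_\eta$ orbit-by-orbit over the orbifold strata of $\cY$, match against discrepancies of weighted blow-ups, and then pass through the Morse-Bott spectral sequence to compare with $SH^{+,S^1}_*$. However, there are two genuine gaps, and they are precisely at the steps you flag (or don't flag) as ``standard.''

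The first and most serious gap is in the reduction of $\mathrm{md}$ to orbifold blow-ups. Your claim that any $S^1$-invariant divisorial valuation centered at $o$ ``must by construction be an orbifold blow-up at a closed $S^1$-orbit, i.e., at an orbifold point of $\cY$'' is false. Already for $\cC=\bC^n$ with a linear $\bC^*$-action, the $\bC^*$-invariant divisorial valuations centered at the origin include all toric weighted blow-ups, parametrized by positive integer weight vectors, and most of these are not orbifold blow-ups at fixed points of $\cY$; when $\cY$ is smooth there are no orbifold points at all, yet the cone of invariant valuations is still infinite-dimensional. So the equivariant reduction does not by itself localize the computation to orbifold points, and one is left with all the work of bounding the discrepancy from below over this infinite set. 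The paper's proof of the key algebraic formula \eqref{eqn:md} does not perform an equivariant reduction at all; instead, it establishes a direct Reid--Tai--type lower bound $a(F,\cC',(1-r)\cY_0)+1\ge \frac{1}{m}(rw_1(g)+\sum_{i\ge 2}w_i(g))$ for \emph{every} exceptional divisor $F$ over a quotient singularity of $\cC'$, and then exhibits weighted blow-ups achieving the bound. That inequality, not equivariance, is what makes the argument close.

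The second gap is in the Morse-Bott spectral sequence step. The class with minimal $\mathrm{lSFT}$ is generally \emph{not} at the bottom of the action filtration: it may arise from a highly multiply-covered orbit over a deep orbifold stratum, which can have large period but small Conley–Zehnder index. So ``bottom of the filtration, hence not hit by any differential'' is not what happens; the class of minimal rational degree is automatically a cycle, but it can a priori be a boundary of a class of one degree higher from a different Morse-Bott family at a different period. The paper's Proposition~\ref{prop:hmi} handles this by choosing, among classes of minimal rational degree, the one of \emph{maximal} period $p$, and then invoking the $\bZ_2$-grading: the leading terms of all Morse-Bott blocks share the same mod $2$ Conley-Zehnder parity (Proposition~\ref{prop:Reeb_MB}(\ref{p:d})), so any potential source $\beta$ of a degree-decreasing differential must be a non-leading term whose block's leading term has strictly larger $p$ and no larger total degree, contradicting the extremal choice. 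This $\bZ_2$/$\bQ$ double-grading argument is the real content and is not replaced by an action-filtration remark.

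Your identification of the Morse-Bott families, the form of the index computation over a cyclic quotient singularity, and the $n-3$ normalization are all consistent with the paper, and the klt origin of $\mathrm{md}>-1$ is correct. But as written, the proposal does not supply the two arguments above, and these are where the theorem actually gets proved.
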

In the more special case of isolated quotient singularities, this formula recovers the result in \cite[Proposition 2.12]{Zh24}. We remark that our proof of Theorem \ref{thm:CZ} does not use the main formula from \cite[Theorem 1.1]{McL16} (and in fact reproves it in Proposition \ref{prop:hmi} for our setting) which states that $2\cdot\mathrm{md}$ is equal to the highest minimal index denoted by $\mathrm{hmi}$ for canonical singularities. Indeed, the definition of the contact invariant $\mathrm{hmi}$ in \cite{McL16} (see \eqref{eq-hmi}) requires enumeration through all contact forms\footnote{The minimal discrepancy is achieved by certain contact forms constructed from a simple normal crossing resolution in \cite{McL16}}, while \Cref{thm:CZ} asserts that the minimal discrepancy can be computed using the Reeb dynamics from a single conic contact form and is witnessed by a non-trivial class in Floer homology. The latter can be approached using properties of symplectic homology, which allows us to show the equality $\mathrm{md}(o, \cC)=n-1$
for the Fano cone arising in our problems. 
The existence of a Liouville filling $W$ is just for the existence of $\bQ$-graded $S^1$-equivariant positive symplectic cohomology.
The specific choice of $W$ does not matter. In a more general setting, one can replace $SH^{+, S^1}_*(W;\bQ)$ by the linearized contact homology of $M$ w.r.t.\ a $\bQ$-graded augmentation over $\bC$, which always exists in our case (see \Cref{rmk:aug} for more discussions).

The application of tools from symplectic geometry to the compactification problem seems new to us. From a broader perspective, we use techniques from symplectic geometry to rule out certain singularities. The application of (very different) symplectic techniques to complex geometry to rule out singularities has appeared in the recent solution to the generalized Bishop problem \cite{FHYZ}.
 
\subsection*{Acknowledgments}
C. Li is partially supported by NSF (Grant No. DMS-2305296). Z.\ Zhou is supported by National Key R\&D Program of China under Grant No.2023YFA1010500, NSFC-12288201, and NSFC-12231010. C.\ Li would like to thank Prof.\ Laszlo Lempert for helpful discussions, Prof. Mircea Musta\c{t}\u{a} for comments on Shokurov's conjecture, and Prof.\ Gang Tian for his interest and useful comments. Both authors would like to thank Prof.\ Xiaojun Huang for putting them in contact. They also thank Prof.\ Thomas Peternell for communications regarding his preprint \cite{Pet24}. 

\section{Preliminaries}

\subsection{Fano cone singularities and AC K\"{a}hler metrics}
\label{sec-Fano}

Let $\cC$ be an $n$-dimensional normal affine variety with an isolated  $\bQ$-Gorenstein singularity $o\in \cC$. Here $\bQ$-Gorenstein means that the canonical divisor $K_\cC$ is $\bQ$-Cartier, i.e.\ there exists $k\in \bZ_{>0}$ such that $k K_{\cC}$ is a Cartier divisor.
We choose a log resolution of singularity $\mu: \tilde{\cC}\rightarrow \cC$ such that $\mu$ is an isomorphism away from $o\in \cC$ and $\mu^{-1}(o)$ is a simple normal crossing divisor. Then we have an identity 
\begin{equation}\label{eq-disc}
K_{\tilde{\cC}}=\mu^*K_\cC+\sum_i a(E_i, \cC) E_i
\end{equation}
where $a(E_i, \cC)\in \bQ$ is called the discrepancy of the exceptional divisor $E_i$. 
\begin{rem}\label{rem-disc}       
Analytically, one can compute discrepancies in the following way. For simplicity, we assume $k=1$ (Gorenstein case) and refer to \cite[Definition 2.1]{Kol13} for the general situation. Pick a local generator $s$ of $\cO(K_{\cC})(U)$ where $U$ is a small neighborhood of $o\in \cC$. Then $s$ is a non-vanishing holomorphic $n$-form on $U\setminus o$ and its pull-back $\mu^*s$ is in general a meromorphic $n$-form. Choose a general point on $E_i$ and nearby coordinates $\{y_1, \dots, y_n\}$ such that $E_i=\{y_1=0\}$ and $\mu^*s=f(y)dy_1\wedge \cdots \wedge dy_n$. Then by definition $a(E_i, \cC)=\mathrm{ord}_{\{y_1=0\}}f(y)$. For example, if $E$ is the exceptional divisor of the standard blowup of $ \mathbb{C}^n$ at a point, then $a(E, \mathbb{C}^n)=n-1$ . 
\end{rem}
 
\begin{defn}\label{def-klt}
   With the above notation, the singularity $(\cC, o)$ is Kawamata log terminal (klt) if $a(E_i, \cC)>-1$ for any $i$. The minimal discrepancy of $o\in \cC$ is defined as $$\mathrm{md}(o, \cC)=\min_i \left\{a(E_i, \cC); \mu(E_i)=\{o\}\right\}.$$
\end{defn}
In other words, the singularity $(\cC, o)$ is klt if and only if $\mathrm{md}(o, \cC)>-1$. 
It is known that this definition of klt singularity and the minimal discrepancy do not depend on the choice of log resolution of the singularity. Klt singularities are local analogues of Fano varieties, and they play an important role in birational algebraic geometry (see \cite{Kol13}).

\begin{conj}[Shokurov]\label{conj-Shok}
    For any klt singularity $o\in \cC$ of dimension $n$, $\mathrm{md}(o, \cC)\le n-1$ and the equality holds if and only if $o\in \cC$ is a smooth point. 
\end{conj}
Shokurov's conjecture has important applications in birational algebraic geometry, and it has been proved in dimension $\le 3$ (\cite{Mar96,Kaw93}). 

We now explain the concept of a K\"{a}hler cone metric on $\cC$. 
This means that there is smooth function $\mathfrak{r}: \cCo:=\cC\setminus o\rightarrow \bR_{> 0}$, called the radius function, which is a surjective submersion such that if we set $\hat{\omega}:=\sqrt{-1}\partial\bar{\partial}\fkr^2$ and $g_0=\hat{\omega}(\cdot, J_0 \cdot)$ where $J_0$ is the integrable complex structure on $\cCo$, then $g_0$ is a Riemannian metric on $\cCo$ that is isometric to a Riemannian cone metric $d\fkr^2+\fkr^2 g^M$ where $M=\{\fkr=1\}$ is the link and $g^M=g_0|_M$. 
Define the conic contact form associated to the radius function $\fkr$ as $\eta=-\fkr^{-1} J_0 d\fkr$. Then we have $\hat{\omega}=2\fkr d\fkr \wedge \eta+\fkr^2 d\eta$. The Reeb vector field, denoted by $\RV=J_0 (\mathfrak{r}\partial_\fkr)$, is a holomorphic Killing vector field on $\cC$ and satisfies $\eta(\RV)=1$. The associated holomorphic vector field $\RV_\bC=\fkr \partial_\fkr-\sqrt{-1} J_0(\fkr\partial_\fkr)$ generates an effective holomorphic action by a (complexified) torus $(\bC^*)^m$ which we will also denote by $\la \RV\ra$. 
If $m=1$, $\RV$ and $\eta$ are called quasi-regular and otherwise (i.e. when $m\ge 2$) $\RV$ and $\eta$ are called irregular. 

The Lie algebra of this torus $\la \RV\ra\cong (\bC^*)^m$ is isomorphic $\bC^m=\bR^m\oplus \sqrt{-1}\bR^m$ whose elements correspond to special holomorphic vector fields on $\cC$. The Reeb cone $\mathrm{t}_\bR^+$ is a convex cone in the $\bR^m$ factor whose elements correspond to the Reeb vector fields associated to radius functions as above. The Reeb cone can be described algebraically (cf. \cite[Definition 3.2]{LLX20}) and can also be considered as the dual cone to the cone image of the moment map associated with the Hamiltonian $(S^1)^m$-action on $(\cC, \hat{\omega})$ (see \cite[1.2]{MSY08}).
In general, any element $\RV'\in \mathrm{t}^+_\bR$ is called quasi-regular if $\RV'-\sqrt{-1}J_0\RV'$ generates a $\bC^*$ action and is called irregular otherwise. It is easy to see that the quasi-regular condition is equivalent to the condition that all orbits of the Reeb vector field are closed.

\begin{rem}
    As the notation conventions in the Sasaki geometry literature and contact geometry literature are different, we fix $\eta$ as the contact form, $\RV$ as the Reeb vector field, and $\xi:=\ker \eta$ as the contact structure in this paper.
\end{rem}

In the quasi-regular case, we can take the quotient of $\cCo$ by the $\bC^*$-action or, equivalently, the quotient of $M$ by the $S^1$-action, to get an orbifold $\cY:=(Y, \Delta)=M/S^1=\cCo/\bC^*$ equipped with an orbifold line bundle $\CL$. Here $Y$ denotes the quotient space and $\Delta=\sum_\alpha (1-\frac{1}{m_\alpha})D_\alpha$ denotes the branch divisor
where the sum is taken over all Weil divisors $D_\alpha$ that lie in the orbifold singular locus and $m_\alpha$ is the gcd of the orders of the local uniformizing groups taken over all points of $D_\alpha$ (see \cite[Definition 4.4.8]{BG08}). 
There is a natural projection $\pi: \cC^\circ\rightarrow Y$ which makes $\cC^\circ$ a $\bC^*$-Seifert bundle over the cyclic orbifold $\cY$ which was studied by Koll\'{a}r in \cite{Kol04}\footnote{In the literature of algebraic geometry, $\cC$ is also called an affine variety with a good $\bC^*$-action (See \cite{Kol04} and its references). }.
The affine variety $\cC$ is algebraically described as:
\begin{equation*}
    \cC=\cC(\cY, \CL):=\mathrm{Spec}\left(\bigoplus_m H^0(\cY, m \CL)\right). 
\end{equation*}
Geometrically, it is obtained from the total space of the orbifold line bundle $\CL^{-1}$ by contracting the zero section. 
For simplicity, we will call $o\in \cC$ an orbifold cone singularity. The affine variety
$\cC$ can be compactified to a projective variety $\overline{\cC}$ by adding an divisor $\cY_\infty\cong \cY$ at infinity (see \cite[14]{Kol04}). According to \cite[42]{Kol04}, the singularity $o\in \cC$ is klt if and only $(Y, \Delta)$ is a Fano orbifold (i.e. the orbifold anti-canonical line bundle $-K^\orb_{(Y,\Delta)}$ is ample) and there exists $\ell\in \bQ_{>0}$ such that $-K^\orb_{(Y,\Delta)}\sim_\bQ \ell \CL$ (see Proposition \ref{prop-klt} for a proof in our setting). In this case, we will simply call the orbifold cone $o\in \cC$ a \textit{Fano cone} (by following the terminology in \cite[Definition 3.7]{LLX20}). 
Moreover, in the quasi-regular case, the metrics $g_0$ and $g^M$ discussed in the previous paragraph are induced by a Hermitian metric $h$ on $\CL$ over $(Y, \Delta)$ whose Chern curvature will be denoted by $\omega^Y$.

We now introduce the class of K\"{a}hler metrics on quasi-projective varieties that are modeled on K\"{a}hler cone metrics near infinity. 
\begin{defn}\label{def-AC}
Let $(W, g)$ be a complete K\"{a}hler manifold whose integrable complex structure is denoted by $J$. It is called asymptotically conical (AC) with the asymptotical cone $(\cC, g_0)$ if there exists a compact subset $K\subset W$ and a diffeomorphism $\Phi: \{\fkr>1\}\rightarrow W\setminus K$ such that for some $\lambda_1>0, \lambda_2>0$ and all $j\in \mathbb{Z}_{\ge 0}$. 
\begin{eqnarray}
    &&|\nabla ^j_{g_0} (\Phi^*g-g_0)|_{g_0}=O(\fkr^{-\lambda_1-j}), \\
    &&|\nabla^j_{g_0}(\Phi^* J-J_0)|_{g_0}=O(\fkr^{-\lambda_2-j}).
\end{eqnarray}
\end{defn}

If the K\"{a}hler metric $g$ is Ricci-flat, then its asymptotical cone $(\cC, g_0)$ is also Ricci-flat. In this case, the metric $g^M$ on the link $M$ is called Sasaki-Einstein. When $\RV$ is quasi-regular, then $g^M$ is Sasaki-Einstein if and only if the orbifold K\"{a}hler metric $\omega^Y$ is K\"{a}hler-Einstein i.e.\ $Ric(\omega^Y)=\ell \omega^Y$. 
We refer to the book \cite{BG08} for these facts and an extensive exposition on Sasaki geometry, particularly Sasaki-Einstein metrics. 

\begin{exmp}[Tian-Yau metrics, \cite{TY91}]\label{exmp-TY}
    Let $X$ be a Fano manifold, which means that $X$ is a projective manifold with an ample anticanonical line bundle $-K_X$. Let $Y$ be a smooth divisor whose associated line bundle over $X$ is denoted by $[Y]$. Assume that there exists $\beta>1$ such that $-K_X=\beta [Y]$. In particular, $[Y]$ is an ample line bundle. By adjunction formula, $-K_Y=(\beta-1)L$ with $L=[Y]|_Y$ being the normal bundle of $Y\subset X$ and hence $Y$ is also a Fano manifold. Then there exists a complete AC K\"{a}hler metric on $X\setminus Y$ whose asymptotical cone $\cC$ is given by
    $\mathrm{Spec}\left(\bigoplus_{m=0}^{+\infty} H^0(Y, mL)\right)$. In fact, the K\"{a}hler form near $Y$ can be chosen to be given by $\sqrt{-1}\partial\bar{\partial}\|s\|^{-\frac{2(\beta-1)}{n}}$ where $s$ is the defining section of $Y$ and $\|\cdot\|^2$ is a smooth Hermitian metric on the line bundle $[Y]$ (\cite[(2.2)]{TY91}, see also \cite[section 5]{Li20}). 
    Moreover, if $Y$ admits a K\"{a}hler-Einstein metric with positive Ricci curvature, then there exists a complete AC Ricci-flat K\"{a}hler metric on $X\setminus Y$ by solving a complex Monge-Amp\`{e}re equation. 
    \end{exmp}

We will use the following compactification result, which only assumes that the complex structure is asymptotic to the cone. 
\begin{thm}
\label{thm-compactify}\label{thm:compactify}
Let $(W, g, J)$ be a complete K\"{a}hler manifold and $(\cC, g_0, J_0)$ be an orbifold cone as before. Assume that there exist a compact subset $K\subset W$ and a diffeomorphism $\Phi: \{\mathfrak{r}>1\}\rightarrow W\setminus K$ such that for some $\lambda_2>0$ and all $j\in \mathbb{Z}_{\ge 0}$, 
\begin{equation*}
    |\nabla^{j}_{g_0}(\Phi^*J-J_0)|_{g_0}=O(\mathfrak{r}^{-\lambda_2-j}).
\end{equation*}
\begin{enumerate}
\item (\cite{Li20, CH24}) Assume that the Reeb vector field $\RV$ is quasi-regular on the cone $\cC$. 
There exists a compact K\"{a}hler orbifold $\cX$ and a sub-orbifold divisor $\cY$ such that $\cY\cong \cC^\circ/\la \mathfrak{v}\ra$ and $\cX\setminus \cY$ is biholomorphic to $W$. Moreover there exists an orbifold diffeomorphism $\phi: (\mathcal{U}, \cY)\rightarrow (\mathcal{V}, \cY_\infty)$ where $\mathcal{U}$ is a strongly pseudo-concave neighborhood of $\cY\subset \cX$ and $\mathcal{V}$ is a strongly pseudo-concave neighborhood of $\cY_\infty\subset \overline{\cC}$ where $\overline{\cC}=\cC\cup\cY$ is the projective compactification of $\cC$ mentioned above.
\item  (\cite{CH24})
Assume that the Reeb vector field $\RV$ is irregular. Then there exists a sequence of radius functions $\mathfrak{r}_i$ on $\cC$ whose associated Reeb vector fields $\RV_i$ are quasi-regular such that $\RV_i$ converge to $\RV$ inside the Reeb cone $\mathrm{t}_\bR^+$ as $i\rightarrow+\infty$. For any $0<\epsilon\ll \lambda_2$ there exists $I$ such that for any $i\ge I$, 
$|\nabla_{g_{0,i}}^j(\Phi^*J-J_0)|=O(\mathfrak{r}_i^{-\lambda_2+\epsilon-j})$ for all $j\in \mathbb{Z}_{\ge 0}$. As a consequence, there exists an orbifold compactification of $W$ by $\cY_i=\cC^{\circ}/\la \RV_i\ra$ when $i\ge I$. 
\end{enumerate}
\end{thm}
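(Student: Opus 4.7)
The plan is to reduce part (2) to part (1) by rational approximation of the Reeb inside the torus $\la \RV \ra$, and to prove part (1) by transporting the natural projective compactification $\overline{\cC}$ of the cone across $\Phi$, using the polynomial decay of $\Phi^*J-J_0$ to extend the complex structure across a divisor added at the end of $W$.

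For part (1), the quasi-regular Reeb $\RV$ generates a $\bC^*$-action on $\cCo$ whose quotient is the Fano orbifold $\cY$, and the equivariant projective completion $\overline{\cC}$ adds at infinity a divisor $\cY_\infty \cong \cY$ with a strongly pseudo-concave tubular neighborhood modeled on the orbifold normal bundle $\CL$. I would pass to a coordinate $t$ proportional to $\fkr^{-1}$ along the fibers of $\CL$ that identifies the end $\{\fkr > R\} \subset \cCo$ with a punctured tubular neighborhood of $\cY_\infty \subset \overline{\cC}$. Pulling $J$ back via $\Phi$ in these coordinates gives an integrable almost complex structure on $\{0 < |t| < \varepsilon\}$ whose deviation from the model $J_0$ decays as a power of $|t|$, together with all derivatives. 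I would then apply a removable-singularity extension theorem for integrable complex structures across a divisor (in the orbifold category) to obtain a genuine integrable orbifold complex structure on a full neighborhood of $\cY_\infty$ that agrees with $\Phi^*J$ off the divisor. Gluing this neighborhood to $W$ via $\Phi$ yields a compact K\"ahler orbifold $\cX$ containing $\cY$ as a sub-orbifold divisor; the K\"ahler form is assembled from $\Phi^*\hat{\omega}$ near infinity and patched with a local K\"ahler potential across $\cY$ by a cutoff.

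The hard part is the extension step. Polynomial decay of $\Phi^*J - J_0$ does not by itself guarantee that the $\bC^*$-foliation on the cone extends to a holomorphic foliation on $W$ near the end; equivalently, one must show that enough holomorphic sections of $\CL^m$ on the cone end lift, after a small correction of $\Phi$, to honest holomorphic objects on the $W$-side. This is settled in \cite{Li20} by constructing such holomorphic functions on $W$ via H\"ormander-type $L^2$ estimates and assembling them into a $\mathrm{Proj}$-style compactification, and in \cite{CH24} by a direct CR-deformation argument for Sasakian ends; both approaches rely essentially on the positivity of $\CL$ (the Fano property of $\cY$) and on the completeness of $g$ to control the potential-theoretic estimates.

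For part (2), I would choose quasi-regular $\RV_i$ in the Lie algebra of $\la \RV \ra$ converging to $\RV$. Each $\RV_i$ comes with a radius function $\fkr_i$ and a cone metric $g_{0,i}$ depending smoothly on the Reeb parameter; on annular shells $\{\fkr \sim R\}$ the ratios $\fkr_i/\fkr$ and the metrics $g_0, g_{0,i}$ are mutually comparable up to factors tending to $1$ as $i \to \infty$. Substituting these comparisons into the original decay estimate absorbs the perturbation into the slightly weaker exponent $\lambda_2 - \varepsilon$ and yields the stated bound for $|\nabla^j_{g_{0,i}}(\Phi^*J - J_{0,i})|$. At that point part (1) applied to the quasi-regular $\RV_i$ produces the desired orbifold compactification by $\cY_i = \cCo/\la \RV_i\ra$ for all $i$ sufficiently large.
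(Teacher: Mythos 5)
This is a cited result (\cite{Li20,CH24}), and the paper only gives a short sketch after the statement rather than a full proof; so a blind ``proof'' here can only be measured against that sketch. Your plan differs in emphasis: you propose to build $\cX$ directly by transporting the end of $W$ to a punctured tubular neighborhood of $\cY_\infty \subset \overline{\cC}$ in a $t\propto\mathfrak{r}^{-1}$ coordinate and invoking an extension across the divisor $\{t=0\}$, whereas the paper's commentary instead explains the \emph{tubular-neighborhood diffeomorphism} side of statement (1) by the deformation to the normal cone: blow up $\cY\times\{0\}\subset \cX\times\bC$, integrate a vector field to obtain a family $\sigma_t$ with $|\sigma_t^*J-J_0|=O(|t|)$, and use Gray stability; in the orbifold case it quotes Conlon--Hein's observation that the restricted deformation is locally trivial. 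Your route and the paper's both ultimately defer the existence of the compactification itself to \cite{Li20,CH24}, so at that level there is no conflict.

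The genuine gap in your proposal is the ``removable-singularity extension theorem for integrable complex structures across a divisor.'' No such off-the-shelf theorem exists, and you cannot obtain it simply from polynomial decay of $\Phi^*J-J_0$ with all covariant derivatives: those covariant derivatives are taken with the singular cone metric $g_0$, which in the $t$-coordinate degenerates like $dt^2/t^4+g^M/t^2$ near $t=0$, so the bounds do not straightforwardly translate into Hölder-type regularity of the coefficients of $\Phi^*J$ across $\{t=0\}$, let alone smoothness of an integrable structure there. The hard analytic content in \cite{Li20} (Hörmander $L^2$ construction of enough holomorphic functions/sections) and in \cite{CH24} (CR/deformation arguments on the Sasakian end) is precisely the replacement for this nonexistent extension theorem. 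You do flag that the extension step is the crux and you correctly locate where it is settled in the literature, which keeps your proposal honest; but as written, the extension step is not a valid argument, it is a placeholder. Your treatment of part (2) — approximating $\RV$ by rational Reeb fields $\RV_i$ and absorbing the metric/radius comparisons into a loss $\varepsilon$ of decay rate — matches the statement and is consistent with the paper.
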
 
The compactification result in (1) was proved when $\RV$ is regular in \cite{Li20} and generalized to the orbifold setting in \cite[Appendix IV]{CH24}. One way to get the diffeomorphism $\Phi$ is to use the deformation to the normal cone, which we now briefly explain. Assume first that $\RV$ is regular so that $\cY=(Y, \emptyset)$ and $\cX=(X, \emptyset)$ in the above Theorem are smooth complex manifolds. Denote by $\mu: \mathfrak{X}\rightarrow X\times\bC$ the blowing-up of $Y\times \{0\}\subset X\times \bC$. The exceptional divisor $E$ is isomorphic to the projective bundle $\mathbb{P}(L\oplus \bC)$ where $L$ is the normal bundle of $Y$ in $X$. The divisor $E$ contains the divisor at the zero section $Y_0$ and the divisor $Y_\infty$ at infinity. 
 We then get a flat family $\pi=\mathrm{proj}_2\circ \mu: \mathfrak{X}\rightarrow \bC$ of projective varieties. Denote the fiber over $t\in \bC$ by $X_t$. The central fiber $X_0$ is the union $X\cup E $ such that $X$ and $E$ are glued along $Y\subset X$ and $Y_\infty \subset E$. On a neighborhood of $Y_0\subset \mathfrak{X}$ we can integrate a smooth vector field to construct a smooth family of diffeomorphism $\sigma_t$ from a neighborhood $\mathcal{U}$ of $Y\subset L$ to a small neighborhood $\mathcal{V}_t$ of $Y\subset X_t$ such that $\sigma_0=\mathrm{id}_{\mathcal{U}}$ and $|\sigma_t^* J-J_0|=O(|t|)$ where $J$ is the complex structure on $X_t=X$ and $J_0$ is the complex structure on $L$ (see \cite[Proposition 3.1]{Li20}). Choose $\epsilon\ll 1$ such that $M^\epsilon=\{s\in L; |s|^2=\epsilon\}$ is contained in $\mathcal{U}$. It is clear that $M^\epsilon$ is isomorphic to $M$ as CR-manifolds thanks to the holomorphic $\bC^*$ action on $L$. For $|t|\ll 1$, the CR manifolds $(M_t, \sigma_t^*J|_{M^\epsilon})$ are still strongly pseudo-convex, whose associated contact structures are constant due to Gray's stability theorem in contact geometry. More generally, if $\RV$ is quasi-regular, Conlon-Hein observed that the construction of deformation to the normal cone can be applied and moreover the restriction deformation of $\mathfrak{X}$ to a small neighborhood of $\cY\subset \cX$ is locally trivial (see \cite[Definition IV.5]{CH24}). This can be considered as a tubular neighborhood theorem in the setting of cyclic orbifolds and allows us to use the same construction on uniformizing charts, like in the regular case, to get a diffeomorphism $\Phi$ as stated in the above theorem. 

\subsection{Orbifold K\"{a}hler compactification of $\bC^n$}

First, we recall some results proved by Brenton-Morrow in \cite{BM78} (see also \cite{Ven62, PS91}).
\begin{thm}\cite[Theorem 1.1]{BM78}\label{thm-BM}
Let $X$ be a connected compact complex manifold and let $Y$ be a K\"{a}hler submanifold such that $X\setminus Y$ is biholomorphic to $\bC^n$. Then the following statements are true:
\begin{enumerate}
    \item The complex manifold $X$ is projective algebraic and $Y$ is positively embedded hypersurface. The line bundle $[Y]$ associated to the divisor $Y$ is ample over $X$. 
    \item There are ring isomorphisms $H^*(X, \bZ)\cong H^*(\bP^n, \bZ)$ and $H^*(Y, \bZ)\cong H^*(\bP^{n-1}, \bZ)$. 
    \item $H^2(X, \bZ)$ is generated by $c_1([Y])$ and $H^2(Y, \bZ)$ is generated by $c_1(L)$ where $L=[Y]|_Y$ is the normal bundle of $Y$ inside $X$. 
    \item $-K_X=(r+1)[Y]$ with $r\ge 1$ and $-K_Y=r L$. In particular, $X$ and $Y$ are both Fano manifolds. 
\end{enumerate}
\end{thm}
\begin{rem}
Rigorously speaking, in \cite{BM78}, the above results were proved in a more general situation that $X\setminus Y$ is a complex homology cell, and for that purpose, another assumption was added: $X$ contains no exceptional subvarieties. This assumption was used in the proof first to show that the normal bundle $L=[Y]|_Y\rightarrow Y$ is positive (it was shown that $L$ is either positive or negative) and then to show that $X$ is projective via a result of Grauert (\cite{Gra62}, \cite[Theorem 2.4]{MR75}). 
Here we already know that $X\setminus Y$ is $\bC^n$, which is Stein, hence it is guaranteed that the normal bundle $L$ is positive (since otherwise by Grauert's criterion (\cite{Gra62}) $Y$ can be contracted to a point $p$ to give a compact analytic space $\hat{X}$, but $X\setminus Y=\hat{X}\setminus p$ can never be Stein when the dimension $n>1$). 
\end{rem}
Now we generalize the above results to an orbifold setting. 
Let $(\bC^n, g)$ be an AC K\"{a}hler metric that is asymptotical to $(\cC, g_0)$ with a quasi-regular Reeb vector field $\RV$. 
Let $\cY\subset \cX$ be the orbifold compactification of $\bC^n$ obtained by Theorem \ref{thm-compactify}. In particular, $\cY=\cC^\circ/\la \RV\ra$. 
Let $[\cY]$ denote the orbifold line bundle on $\cX$ associated to the orbifold divisor $[\cY]$ and denote by $\CL$ its restriction to $\cY$ which coincides with the orbifold normal bundle $\CL=[\cY]|_{\cY}=N_{\cY/\cX}$. We refer to \cite[4.4]{BG08} for these notions for complex orbifolds.  We will use the orbifold cohomology introduced in \cite{Hae84}. In other words, we set $H^i_\orb(\cX, \bZ)=H^i(B\cX, \bZ)$ and $H^i_\orb(\cY, \bZ)=H^i_\orb(B\cY, \bZ)$ where $B\cX$ and $B\cY$ are the classifying spaces of the orbifolds $\cX$ and $\cY$ respectively. $B\cX$ can be constructed by the following procedure (see \cite{Hae84,HS91} and \cite[Chapter 4]{BG08} for more details). Choose a Riemannian metric on $\cX$ and let $P$ denote the differentiable bundle of orthonormal frames of $\cX$. Let $EO(2n)\rightarrow BO(2n)$ be a universal $O(2n)$-bundle, then $B\cX$ is the quotient of $EO(2n)\times P$ by the diagonal action of $O(2n)$. A similar construction applies to $\cY$. Moreover, since it is a global quotient of $M$ by an effective $S^1$ action, we can use the following construction following \cite[section 5.3]{HS91}. We choose an $S^1$-invariant Riemannian metric on $M$. Let $P_M$ denote the bundle of orthonormal frames of $M$ and set $BM=P_M\times_{O(2n-1)}EO(2n-1)$ where $EO(2n-1)\rightarrow BO(2n-1)$ is a universal $O(2n-1)$-bundle. We can set $B\cY=BM/S^1$ and we have a principal $S^1$-bundle $BM\rightarrow BM/S^1=B\cY$.
\begin{prop}\label{prop-compactify}
With the above notation, the following properties hold true:
\begin{enumerate} 
\item There are ring isomorphisms $H_\orb^*(\cX, \bQ)\cong H^*(\bP^n, \bQ)$ and $H_\orb^*(\cY, \bQ)\cong H^*(\bP^{n-1},\bQ)$. 
\item $H^2_\orb(\cX, \bZ)$ is generated by $c_1^\orb([\cY])$ and $H^2_\orb(\cY, \bZ)$ is generated by $c_1^\orb(\CL)$. 
\item $-K^\orb_\cX= (r+1) [\cY]$ with $r\ge 1$ is an ample orbifold line bundle and  $-K^\orb_\cY=r \CL$ is also ample. In particular, both $\cX$ and $\cY$ are Fano orbifolds and $(r+1)$, $r$ are their Fano indices respectively. 
\item The link $M$ of the cone $\cC$ is an integral homology sphere and bounds a homology ball in $\bC^n$. 

\end{enumerate}
\end{prop}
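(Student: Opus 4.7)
The proof closely follows the strategy of Brenton--Morrow~\cite{BM78}, extended to the orbifold setting. I carry out the four parts in the order (4), then (1)+(2), then (3).

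For (4), the tubular neighborhood statement in Theorem~\ref{thm:compactify} identifies a neighborhood $\cU$ of $\cY\subset\cX$ with an orbifold disk bundle of the normal bundle $\CL$, whose boundary is the link $M$. The compact complement $K:=\cX\setminus\cU^\circ$ lies inside $\bC^n=\cX\setminus\cY$, hence carries trivial orbifold structure, and has boundary $M$. The open collar $\cU^\circ\setminus\cY$ deformation retracts to $M$, so $\bC^n$ deformation retracts to $K$; since $\bC^n$ is contractible, so is $K$. Poincar\'e--Lefschetz duality then yields $H^k(K,M;\bZ)\cong H_{2n-k}(K;\bZ)=0$ for $k>0$, and the long exact sequence of $(K,M)$ forces $M$ to be an integer homology $(2n-1)$-sphere bounding the homology ball $K\subset\bC^n$.

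For (1) and (2), the principal $S^1$-Seifert bundle $M\to\cY$ has orbifold Euler class $\beta:=c_1^\orb(\CL)\in H^2_\orb(\cY;\bZ)$. Its Gysin sequence,
\begin{equation*}
\cdots\to H^{p-2}_\orb(\cY;\bZ)\xrightarrow{\cup\beta} H^p_\orb(\cY;\bZ)\to H^p(M;\bZ)\to H^{p-1}_\orb(\cY;\bZ)\to\cdots,
\end{equation*}
combined with the homology-sphere property of $M$, makes $\cup\beta$ an isomorphism in the appropriate range and yields $H^*_\orb(\cY;\bZ)\cong\bZ[\beta]/(\beta^n)$. Next, the long exact sequence of compactly supported orbifold cohomology attached to the open inclusion $\bC^n\hookrightarrow\cX$,
\begin{equation*}
\cdots\to H^p_c(\bC^n;\bZ)\to H^p_\orb(\cX;\bZ)\to H^p_\orb(\cY;\bZ)\to H^{p+1}_c(\bC^n;\bZ)\to\cdots,
\end{equation*}
together with $H^*_c(\bC^n;\bZ)$ being $\bZ$ concentrated in degree $2n$, shows that restriction is an isomorphism for $p\leq 2n-2$, and $H^{2n}_\orb(\cX;\bZ)=\bZ$ with $H^{2n-1}_\orb(\cX;\bZ)=0$. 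Functoriality of the orbifold first Chern class gives $i^*\alpha=\beta$ for $\alpha:=c_1^\orb([\cY])$, so $\alpha$ generates $H^2_\orb(\cX;\bZ)$ and $H^*_\orb(\cX;\bZ)\cong\bZ[\alpha]/(\alpha^{n+1})$.

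For (3), the existence of a K\"ahler form on $\cX$ together with $H^2_\orb(\cX;\bQ)=\bQ\cdot\alpha$ implies that $[\cY]$ is ample, since any K\"ahler class must be a positive multiple of $\alpha$. Using (2), I may write $-K^\orb_\cX=c\,[\cY]$ for some integer $c$. The global holomorphic $n$-form $\Omega=dz_1\wedge\cdots\wedge dz_n$ on $\bC^n$ extends to $\cX$ as a meromorphic section of $-K^\orb_\cX$ with divisor $c\,\cY$, so $c$ equals the pole order of $\Omega$ along $\cY$. A local computation on an orbifold uniformizing chart, using the model of a tubular neighborhood of $\cY_\infty$ inside $\overline{\cC}$, yields $c\geq 1$. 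To rule out $c=1$, observe that adjunction would give $-K^\orb_\cY=0$, making $\cY$ an orbifold Calabi--Yau variety; but then the existence of a holomorphic $(n-1)$-form on $\cY$ would contribute a nonzero class to $H^{n-1,0}_\orb(\cY)$, contradicting the Hodge--Tate structure of $H^*_\orb(\cY;\bC)\cong H^*(\bP^{n-1};\bC)$ established in Step~2. Adjunction then gives $-K^\orb_\cY=r\CL$ with $r=c-1\geq 1$, and ampleness of $\CL$ produces the Fano conclusion. The main obstacle is Step~3, in particular the orbifold lower bound $r\geq 1$: the original Brenton--Morrow argument in the smooth case relies on Riemann--Roch and intersection-theoretic computations, and the orbifold version requires careful bookkeeping with uniformizing charts near $\cY$ (which may carry nontrivial codimension-two isotropy from the Seifert fibration), with the ampleness of $[\cY]$ (justified via orbifold Kodaira embedding), and with orbifold Hodge theory in the Calabi--Yau exclusion.
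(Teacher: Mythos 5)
Your overall strategy — show $M$ is an integral homology sphere via Lefschetz duality on the contractible complement, run the Gysin sequence for the Seifert $S^1$-bundle $BM\to B\cY$, and deduce the Fano index bound from vanishing of $(p,0)$-Hodge numbers — matches the paper's line almost step for step. Part (4) as you present it (deformation retract of $\bC^n$ onto $K$, then Poincar\'e--Lefschetz) is a slight repackaging of the same computation and is fine.

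There are, however, three places that need repair. First, the integral ring claims $H^*_\orb(\cY;\bZ)\cong\bZ[\beta]/(\beta^n)$ and $H^*_\orb(\cX;\bZ)\cong\bZ[\alpha]/(\alpha^{n+1})$ are false for orbifolds: the Gysin sequence only controls $H^k_\orb(\cY;\bZ)$ for $k\le 2n-2$, and in degrees $\ge 2n$ one can have nontrivial torsion — this happens already for weighted projective spaces, as the paper records in Remark~\ref{rem-torsion}. The proposition only asserts a $\bQ$-ring isomorphism, and to get the top-degree relations (e.g.\ $\beta^n=0$) you should pass to $\bQ$-coefficients, where $H^*_\orb(\cdot;\bQ)$ agrees with the ordinary rational cohomology of the underlying space and the dimension bound kills the high degrees. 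Second, the move from ``$\alpha$ generates $H^2_\orb(\cX;\bZ)$'' to ``every orbifold line bundle on $\cX$, in particular $-K^\orb_\cX$, is an integer multiple of $[\cY]$'' is not automatic: it requires $\mathrm{Pic}^\orb(\cX)\cong H^2_\orb(\cX;\bZ)$, which the paper gets from the orbifold exponential sequence together with the Hodge-theoretic vanishings $H^1_\orb(\cX,\cO_\cX)=H^2_\orb(\cX,\cO_\cX)=0$; you gloss over this entirely. Third, your ``local pole-order computation'' for $c\ge 1$ is vague and, as you yourself flag, would require serious bookkeeping near the fixed loci; the paper avoids this by observing that if $c\le 0$ then $K^\orb_\cX=(-c)[\cY]$ is effective, giving a nontrivial section and hence $h^{n,0}_\orb(\cX)>0$, which contradicts the Hodge decomposition read off from $H^n_\orb(\cX;\bQ)\cong H^n(\bP^n;\bQ)$. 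That cleaner argument resolves in one line exactly the uncertainty you note at the end; the exclusion of $c=1$ via $h^{0,n-1}_\orb(\cY)=0$ is then as you state it.
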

\begin{proof}
Let $\overline{\mathcal{U}}$ be the closed neighborhood of $\cY\subset \cX$ as obtained from Theorem \ref{thm:compactify}. We denote by $\overline{U}$ the underlying space of the orbifold $\overline{\mathcal{U}}$ (with smooth boundary). 
We have an exact sequence for ordinary cohomology:
\begin{equation}\label{eq-rel1}
H^k(X-U, \partial \oYe, \bZ)\rightarrow H^k(X-\oYe, \bZ)\rightarrow H^k(\partial \oYe, \bZ)\rightarrow H^{k+1}(X-\oYe, \partial \oYe, \bZ). 
\end{equation}
Using the excision property, we have $H^k(X-\oYe, \partial \oYe, \bZ)=H^k(X, \oYe, \bZ)=H^k(X, Y, \bZ)=H^k_c(X\setminus Y, \bZ)$ for $0\le k< 2n$. So we have 
\begin{equation}
H^k(X-\oYe, \bZ)\cong H^k(\partial \oYe, \bZ)\quad \text{ for } 0\le k\le 2n-2.
\end{equation}  On the other hand, by the Lefschetz duality, we have $H_k(X-\oYe, \bZ)=H^{2n-k}(X-\oYe, \partial \oYe, \bZ)=0$ for $1\le k\le 2n$. 
By the Universal Coefficients Theorem, $H^k(X-\oYe, \bZ)=0$ for $1\le k\le 2n$. So we also get:
\begin{equation}\label{eq-HMvanish}
H^k(\partial \oYe, \bZ)=0\quad  \text{for}\quad 1\le k\le 2n-2.
\end{equation} 

Next, we consider the exact sequence for relative orbifold cohomology:
\begin{equation}
 H^k_\orb(\cX, \cY, \bZ)\rightarrow H^k_\orb (\cX, \bZ)\rightarrow H^k_\orb (\cY, \bZ)\rightarrow H^{k+1}_\orb(\cX, \cY, \bZ)
\end{equation}
We have $H^k_\orb (\cX, \cY, \bZ)=H^k_c(\cX\setminus \cY, \bZ)=0$ for $0\le k< 2n$. Therefore we get $H^k_{\orb}(\cY, \bZ)=H^k_{\orb}(\cX, \bZ)$ for $0\le k\le 2n-2$. In particular  
$H^0_{\orb}(\cY, \bZ)=H^0_{\orb}(\cX, \bZ)\cong \bZ$ and hence $Y$ is connected.
Since $H^{2n}_{\orb}(\cX, \cY, \bZ)=H^{2n}_c(X\setminus Y, \bZ)=\bZ$ when $k=2n-1$, we get:
\begin{equation}
0\rightarrow H^{2n-1}_\orb(\cX, \bZ)\rightarrow H^{2n-1}_\orb(\cY, \bZ)\rightarrow \bZ {\rightarrow} H^{2n}_\orb(\cX, \bZ)\rightarrow H^{2n}_\orb(\cY, \bZ)\rightarrow 0.
\end{equation}
Hence for $k\ge 2n+1$, we get $H^{k}_{\orb}(\cX)\cong H^k_\orb(\cY, \bZ)$. 

Let $M$ denote the circle bundle of $\CL$, which is diffeomorphic to $\partial \oYe$. We apply the Leray spectral sequence to the $S^1$-principal bundle $BM\rightarrow B\cY$ to get the Gysin exact sequence:
\begin{equation}\label{eq-Gysin}
H^k(M, \bZ)\rightarrow H^{k-1}_\orb(\cY, \bZ)\stackrel{\alpha_{k-1}}{\longrightarrow} H^{k+1}_\orb(\cY, \bZ)\rightarrow H^{k+1}(M, \bZ)
\end{equation}
where $\alpha_{k-1}$ is the cup product with $c_1(\CL)\in H^2_\orb(\cY, \bZ)$. 
By the vanishing property \eqref{eq-HMvanish}, we know that $\alpha_{k-1}$ is an isomorphism for $1\le k\le 2n-3$. 
Since $H^0_\orb(\cY, \bZ)=\bZ$, we get $H^{p}_\orb(\cY, \bZ)=c_1^{\orb}(\CL)^{p/2}\bZ$ for $0\le p\le 2n-2$ even. 
For $k=0$ in \eqref{eq-Gysin}, we get $H^1_\orb(\cY, \bZ)=0$ and we get $H^{p}_\orb(\cY, \bZ)=0$ when $1\le p\le 2n-3$ is odd. 
By applying the same argument to $\bQ$ coefficients, we also see that the orbifold cohomology ring of $\cY$ with $\bQ$-coefficients is the same as the cohomology ring of $\bP^{n-1}$, which is generated by $c_1(\CL)$.

Next, the K\"{a}hler assumption implies that there is a Hodge decomposition of $H^k_\orb(\cY, \bC)$ and $H^k_{\orb}(\cX, \bC)$ for any $k$ (see \cite[Part I, 2.5]{PS08}). In particular, we have
\begin{equation}
H^k_\orb(\cX, \bC)=\bigoplus_{p+q=k} H^p_\orb(\cX, \Omega^q_{\cX}). 
\end{equation}
We get in particular that $H^1_\orb(\cX, \cO_{\cX})=0$ and $H^2_\orb(\cX, \cO_X)=0$. The exponential sequence in the complex orbifold setting (see \cite[Theorem 4.4.23]{BG08}) implies that the map 
$\mathrm{Pic}^\orb(\cX)=H^1_\orb(\cX, \cO^*_\cX)\cong H^2_\orb(\cX, \bZ)\cong H^2_\orb(\cY, \bZ)=\bZ$. Because $H^1(M;\bZ)=0$, we know that $c_1(\CL)$ is primitive in $H^2_\orb(\cX, \bZ)$.   

This implies in particular, $-K^\orb_\cX= b [\cY]$ for some $b\in \bZ$. If $b\le 0$, then $K^\orb_{\cX}$ has nonzero holomorphic section $s_\cY^{-b}$ which contradicts $\dim H^0_\orb(\cX, K^\orb_{\cX})=h^{n,0}_\orb(X)=0$. So $b=1+r\ge 1$ i.e. $r\ge 0$. 
By the adjunction formula, we get $-K^\orb_\cY=r \CL$. If $r=0$, then $K^\orb_\cY$ is trivial and $H^{n-1}_\orb(\cY, \cO_Y)=H^{0}_\orb(\cY, K^\orb_\cY)\cong \bC$ which contradicts to the vanishing $h^{0,n-1}(\cY)=0$. So $r\ge 1$. 

By \eqref{eq-HMvanish}, we see that $M=\partial \bar{U}$ is an integral homology sphere. By applying the Mayer-Vietoris sequence to $\bC^n=(\overline{U}\backslash \cY)\cup (\bC^n\setminus U)$, we know that $M=\overline{U}\cap (\bC^n\setminus U)$ bounds an integral homology ball $\bC^n\setminus U$.    
\end{proof}

\begin{rem}\label{rem-torsion}
    In general, $H^k_\orb(\cY, \bZ)$ is not 0 when $k\ge 2n-1$. For example, it is well-known that for the weighted projective space $\bP({\bf w})=\bP(w_1, w_2,\dots, w_n)$ with $d=\prod_i w_i$, we have the orbifold cohomology:
    \begin{equation}
       H^k_\orb(\bP({\bf w}),\bZ)= \left\{
        \begin{array}{ll}
            \bZ &  k \le 2n-2 \text{ and is even} \\
             \bZ_{d} & k\ge 2n \text{ and is even} \\
             0 & k \text{ is odd}. 
        \end{array}
        \right.
    \end{equation}
\end{rem}

We will now show that the $(2n-1)$-dimensional submanifold $M$ of $\bC^n$ from the above proposition, equipped with the induced contact structure, is cobordant to the standard contact sphere via a Liouville cobordism. 

\begin{defn}
    Given two contact manifolds $(M_+,\xi_+),(M_-,\xi_-)$, we say  $(W,\lambda)$ is a Liouville cobordism from $(M_-,\xi_-)$ to $(M_+,\xi_+)$ if 
    \begin{enumerate}
    \item $\rd \lambda$ is a symplectic form on $W$;
    \item  $\partial W= (-M_-)\cup M_+$ as oriented manifolds;
    \item The Liouville vector field $X_{\lambda}$ defined by $\lambda=\iota_{X_{\lambda}}\rd \lambda$ is transversal to $\partial W$ pointing outward/inward along $M_+/M_-$ respectively;
    \item $\ker \lambda|_{M_{\pm}}=\xi_{\pm}$.
    \end{enumerate}
    $(W,\lambda)$ is called a Liouville filling of $(M_+, \xi_+)$ if $M_-=\emptyset$. 
\end{defn}
If $M$ is the smooth boundary of strongly pseudo-convex domain $W$, then $(W,\rd^C\rho:=-Jd \rho)$ is a Liouville filling\footnote{Note that in \cite[(3.15)]{Se08}, $(\rd^C\rho)(v)=\rd \rho(Jv)=(J\rd \rho)(v)$. This explains the potential sign difference in some symplectic literature.} of $M$ whose contact structure is induced by the CR structure and $\rho$ is the strongly plurisubharmonic function whose sub-level set is $W$ with $M$ a regular level set. More generally, $(\rho^{-1}([a,b]),\rd^C\rho)$ is a Liouville cobordism from $\rho^{-1}(a)$ to $\rho^{-1}(b)$ assuming both $a,b$ are regular values. In particular, by \cite[Theorem 2.6.12]{Ho73}, we have the following.
\begin{prop}\label{prop:Liouville}
    The domain $W$ bounded by the contact link $M$ in $\bC^n$ by \Cref{prop-compactify} gives a Liouville filling of $M$.
\end{prop}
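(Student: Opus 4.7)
The plan is to apply the standard recipe described in the paragraph immediately preceding the statement: produce a strictly plurisubharmonic function $\rho$ defined in a neighborhood of $\overline{W}$ with $M$ as a regular level set, and then take $\lambda := -d^{\bC}\rho$. All four axioms of a Liouville filling then follow formally.

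First, I would unpack what $W$ and $M$ are. Proposition \ref{prop-compactify} (combined with the diffeomorphism $\Phi$ from Theorem \ref{thm:compactify}) shows that $\bC^n$ decomposes as $W \cup U$, where $U$ is a strongly pseudo-concave neighborhood of $\cY \subset \cX$ and $W$ is the bounded region with smooth boundary $M = \partial W$. Because the normal bundle $\CL$ of $\cY$ is ample and $M$ is modeled contact-isomorphically on the unit circle bundle of $\CL$, the CR structure that $M$ inherits as a real hypersurface in $\bC^n$ is strongly pseudo-convex (with the outward conormal pointing into $U$).

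Second, I would invoke \cite[Theorem 2.6.12]{Ho73}: since $M$ is a smooth compact strongly pseudo-convex hypersurface in $\bC^n$ enclosing the bounded domain $W$, there exists a smooth function $\rho$ defined on an open neighborhood of $\overline{W}$ in $\bC^n$ such that $\rho$ is strictly plurisubharmonic, $M = \rho^{-1}(0)$ with $d\rho$ nowhere vanishing on $M$, and $W = \{\rho < 0\}$. This is precisely the input the paper has flagged.

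Third, I would verify the four defining conditions of a Liouville filling for $\lambda = -d^{\bC}\rho = -J\, d\rho$. Condition (1): $d\lambda = dd^{\bC}\rho$ is the positive $(1,1)$-form associated to the strictly plurisubharmonic $\rho$, hence symplectic on $W$. Condition (2): $\partial W = M$ as oriented manifolds, the orientation being the one induced on a sublevel set $\{\rho < 0\}$. Condition (4): $\ker(\lambda|_M) = \ker(d\rho \circ J)|_M = TM \cap J(TM)$ is the induced CR/contact distribution $\xi$. Condition (3): the Liouville vector field $X_\lambda$ determined by $\iota_{X_\lambda} d\lambda = \lambda$ is the gradient of $\rho$ with respect to the K\"ahler metric $d\lambda(\cdot, J\cdot)$, so $X_\lambda(\rho) = |d\rho|^2_{g} > 0$ on $M$, which means $X_\lambda$ is transverse to $M$ and points outward.

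There is no substantive obstacle: the only non-trivial ingredient is Hörmander's construction of a strictly plurisubharmonic defining function for $\overline{W}$, and everything else is a routine verification of axioms. The mild bookkeeping point worth stating explicitly is that the ampleness of $\CL$ is what guarantees strong pseudo-convexity of $M$, so that Hörmander's theorem applies in the first place.
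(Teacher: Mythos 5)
Your proof is correct and follows essentially the same route as the paper: the key step is invoking H\"{o}rmander's theorem (\cite[Theorem 2.6.12]{Ho73}) to produce a strictly plurisubharmonic defining function for $W$, after which the Liouville-filling axioms for $\lambda=-\rd^{\bC}\rho$ are the routine verification that the paper records in the paragraph immediately before the proposition. Your write-up merely spells out those axioms in more detail (e.g.\ identifying $X_\lambda$ with $\nabla_g\rho$), but the underlying argument is identical.
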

In general, being a strongly pseudo-convex hypersurface in $\bC^n$ does not imply that the cobordism from $M$ to the standard $S^{2n-1}$ end of $\bC^n$ is a Liouville cobordism. Thanks to the asymptotic cone structure, we do have a Liouville cobordism in our case, which is crucial for our computation of symplectic homology.
\begin{prop}\label{prop:cobordism}
The cobordism from the contact link $M$ to the standard $S^{2n-1}$ end of $\bC^n$ admits a Liouville cobordism structure from the CR structure on $M$ to the standard contact/CR structure on $S^{2n-1}$.
\end{prop}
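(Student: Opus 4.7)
The plan is to construct a strictly $J$-plurisubharmonic function $\tilde{\rho}$ on an open subset of $\bC^n$ containing the closed annular region between $M$ and some large standard sphere $S^{2n-1}_R$ such that $M$ and $S^{2n-1}_R$ are both regular level sets. Then the $1$-form $\lambda = -\rd^{\bC}\tilde{\rho}$ restricted to the annulus $\{0 \leq \tilde{\rho} \leq R^2/2 - c\}$ will be the Liouville form realizing the cobordism, with Liouville vector field given by the $\tilde{\rho}$-gradient and automatically transverse to both boundary components. The induced contact structures on the two ends coincide with the prescribed ones (the CR structure on $M$ and the standard contact structure on $S^{2n-1}$), since these are determined solely by $J$ restricted to the tangent hyperplane distributions.

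Three strictly $J$-plurisubharmonic functions are naturally available on overlapping regions of $\bC^n$. Near $M$, the H\"ormander-type function $\rho_W$ from \Cref{prop:Liouville} (which I extend across $M$ using its strict pseudoconvexity) satisfies $\{\rho_W = 0\} = M$. On the end of $\bC^n$, the pull-back cone potential $\rho_{\mathrm{cone}} := (\mathfrak{r}^2 \circ \Phi^{-1})/2$ is strictly $J$-plurisubharmonic for $\mathfrak{r}$ large: this follows from the asymptotic condition $\Phi^*J \to J_0$ in \Cref{thm:compactify}, since $\rho_{\mathrm{cone}}$ is strictly $J_0$-plurisubharmonic by construction. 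Globally, $\rho_{\mathrm{std}} := |z|^2/2$ is strictly $J$-plurisubharmonic.

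I will glue these three functions using Richberg's regularized maximum $\mathrm{mx}_\epsilon$, which preserves strict plurisubharmonicity and agrees with the ordinary maximum where the two inputs differ by at least $\epsilon$. Choosing suitable shifts $c_1, c_2 > 0$ and a small $\epsilon > 0$, I define $\tilde{\rho}$ piecewise: equal to $\rho_W$ near $M$, to $\rho_{\mathrm{cone}} - c_1$ in a middle annulus, and to $\rho_{\mathrm{std}} - c_2$ for $|z|$ large, with two transition zones where $\tilde{\rho}$ is the regularized maximum of the two neighboring functions. The shifts are selected so that in each transition zone the intended piece dominates the other by at least $\epsilon$ outside a central band, guaranteeing that the assembled function is globally smooth and strictly $J$-plurisubharmonic.

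The hardest point is the outer transition from $\rho_{\mathrm{cone}}$ to $\rho_{\mathrm{std}}$, namely ensuring $\rho_{\mathrm{std}} - c_2 > \rho_{\mathrm{cone}} - c_1 + \epsilon$ outside a compact set. This is precisely where the asymptotic cone hypothesis is essential, as the remark preceding the statement emphasizes that a generic strongly pseudoconvex hypersurface in $\bC^n$ need not admit such a cobordism. Because $\Phi$ is a diffeomorphism between neighborhoods of infinity and $\Phi^*J$ converges to $J_0$ with polynomial decay, the two radial functions $|z|$ and $\mathfrak{r}\circ\Phi^{-1}$ are comparable up to bounded multiplicative constants for $|z|$ large, so choosing $c_2 - c_1$ large enough gives the required inequality. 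Once $\tilde{\rho}$ is constructed, $M = \{\tilde{\rho} = 0\}$ and $S^{2n-1}_R = \{\tilde{\rho} = R^2/2 - c_2\}$ (for $R \gg 0$) are both regular strongly pseudoconvex level sets, and the annular region between them equipped with $\lambda = -\rd^{\bC}\tilde{\rho}$ is the desired Liouville cobordism from the contact link $M$ to the standard contact $(2n-1)$-sphere.
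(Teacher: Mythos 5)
Your proposal uses a genuinely different mechanism from the paper's proof: you try to produce a \emph{single} strictly plurisubharmonic function that has the round sphere $S^{2n-1}_R$ as an honest level set, by patching $\rho_W$, $\rho_{\mathrm{cone}}$ and $\rho_{\mathrm{std}}$ via Richberg's regularized maximum. The paper instead takes $F_1 = \mathfrak{r}^2 + f(\rho)$ (a \emph{sum}, not a max, with $f$ convex increasing and steep outside $B(R)$), obtains a Liouville cobordism from $M$ to the level set $F_1^{-1}(C)$ — which is \emph{not} a round sphere — and then invokes Gray stability along the family of strongly pseudoconvex hypersurfaces $F_t^{-1}(C)$, $t\in[0,1]$, to identify its contact structure with the standard one on $F_0^{-1}(C)$, a genuine round sphere.

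This difference is not cosmetic; it is precisely where your argument has a gap. At the outer transition you require $\rho_{\mathrm{std}} - c_2 > \rho_{\mathrm{cone}} - c_1 + \epsilon$ on a neighborhood of $S^{2n-1}_R$, and you justify this by saying $|z|$ and $\mathfrak{r}\circ\Phi^{-1}$ are comparable up to bounded \emph{multiplicative} constants, ``so choosing $c_2-c_1$ large enough gives the required inequality.'' This does not follow. If, say, $\mathfrak{r}^2\circ\Phi^{-1}\sim 2|z|^2$ asymptotically, then $\rho_{\mathrm{std}} - \rho_{\mathrm{cone}} \to -\infty$ along $|z|\to\infty$, and no constant shift $c_2 - c_1$ will make the inequality hold near any large sphere. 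Multiplicative comparability bounds the \emph{ratio}, not the difference; you would need $\rho_{\mathrm{std}} - \rho_{\mathrm{cone}}$ to eventually dominate, and that is not a consequence of the asymptotic cone condition (nor, for that matter, is the multiplicative comparability itself obvious, since $\Phi$ is only assumed to intertwine the complex structures asymptotically). The paper's formulation sidesteps this entirely: $F_1$ is a sum so pshness is automatic and no domination is needed, the outer boundary is allowed to be an arbitrary level set, and the identification with the round sphere is done at the level of contact structures via Gray's theorem rather than pointwise. To repair your version you would have to show that after rescaling the radius function (which one may do) $\rho_{\mathrm{std}}$ eventually beats $\rho_{\mathrm{cone}}$ additively; this is doable but is a separate argument you have not supplied, and once it is supplied you have essentially reconstructed the role that $f(\rho)$ and the Gray isotopy play in the paper's proof. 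A secondary remark: introducing the separate Hörmander function $\rho_W$ near $M$ is unnecessary — the paper just uses $\mathfrak{r}^2$ there, which is already strictly plurisubharmonic on the end by the asymptotic cone hypothesis and has $M$ as a level set — though this does not create an error, only extra work at the inner transition.
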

\begin{proof}
    Since $M$, as a contact manifold, can be viewed as a level set of the radius function $\mathfrak{r}^2$, where $\mathfrak{r}^2$ defines a strongly plurisubharmonic function on the end of $\bC^n$ by the asymptotic cone assumption. Let $\rho=|z|^2$ using the Euclidean metric. Then $f(\rho)$ is  plurisubharmonic on $\bC^n$ if $f''\ge 0,f'\ge 0$. We assume $M$ is contained in the ball $B(r)$ for a fixed $r\gg 1$. We can choose $f$, such that $f''\ge 0,f'\ge 0$, $f(\rho)=0$ for $\rho \le r^2$. Fix $R>r$ and we require $f$ to grow sufficient fast such that so that $|\nabla \mathfrak{r}^2|<\frac{1}{2} |\nabla f(\rho)|$ outside the ball $B(R)$. As a consequence, for some $C\gg 1$, $C$ is a regular value of $F_t:=t\mathfrak{r}^2+f(\rho)$ for all $t\in [0,1]$. The function $F_1=\mathfrak{r}^2+f(\rho)$ is strongly plurisubharmonic outside $M$ and inside $F_1^{-1}(C)$, where $M$ is a regular level set of $F_1$ by construction.  Therefore, we have a Liouville cobordism from $M$ to $F_1^{-1}(C)$. Finally, as $F_t^{-1}(C)$ is a smooth family of strongly pseudo-convex hypersurfaces in $\bC^n$, the Gray stability implies that $F_1^{-1}(C)$ is isomorphic to $F_0^{-1}(C)$ as contact manifolds, while $F_0^{-1}(C)$ is the standard contact sphere by construction. 
\end{proof}

For the rest of this subsection, we study the algebraic property of the singularity $(\cC, o)$, which is again the asymptotical cone associated to a complete AC K\"{a}hler metric on $(\bC^n, J_0)$ obtained by Theorem \ref{thm-compactify}.

\begin{prop}\label{prop-klt}
    With the above notation, the vertex $o\in \cC$ is a Gorenstein klt singularity. 
\end{prop}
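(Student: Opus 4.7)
The plan is to establish both the Gorenstein and klt properties. From \Cref{prop-compactify}(3), we already have that $\cY$ is a Fano orbifold with $-K^{\orb}_\cY = r\CL$ for an integer $r \ge 1$; crucially, this is an equality of \emph{integer} orbifold line bundles, not merely a $\bQ$-linear equivalence, thanks to the primitivity of $c_1^{\orb}(\CL)$ in $H^2_{\orb}(\cY, \bZ)$ established in the proof of \Cref{prop-compactify}.

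For the klt property, I would apply Kolla\'{r}'s criterion \cite[42]{Kol04} for orbifold cone singularities: since $\cY$ is Fano and $-K^{\orb}_\cY \sim_\bQ r\CL$ with $r > 0$, the vertex $o$ of $\cC = \cC(\cY, \CL)$ is log terminal, and hence klt by normality of $\cC$.

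For the Gorenstein property, I would show that the dualizing sheaf $\omega_\cC$ is a trivial line bundle. Writing $R = \bigoplus_{m\ge 0} H^0(\cY, \CL^m)$ so that $\cC = \mathrm{Spec}\, R$, the orbifold generalization of Watanabe's theorem identifies the canonical $R$-module with $\bigoplus_m H^0(\cY, K^{\orb}_\cY \otimes \CL^m)$. Substituting the integer identification $K^{\orb}_\cY \cong \CL^{-r}$, this graded module becomes $\bigoplus_m H^0(\cY, \CL^{m-r}) \cong R(-r)$, which is isomorphic to $R$ itself as an ungraded $R$-module. Therefore $\omega_\cC \cong \cO_\cC$, so $K_\cC$ is trivial and in particular Cartier, giving the Gorenstein property. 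An equivalent concrete construction: fix a nowhere-zero orbifold section $\sigma$ of the trivial orbifold line bundle $K^{\orb}_\cY \otimes \CL^r$; then on $\cC^\circ$, viewed as the orbifold total space of $\CL^{-1}$ with its zero section removed, the form $\pi^*\sigma \otimes \tau^{r-1}$ (where $\tau \in \pi^*\CL^{-1}$ is the tautological section) is a nowhere-zero section of $K_{\cC^\circ} = \pi^*(K^{\orb}_\cY \otimes \CL) = \pi^*\CL^{1-r}$, and it extends to a nowhere-zero section of $\omega_\cC$ across $o$ by Hartogs, since $\{o\}$ has codimension $n \ge 2$.

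The main point to verify is the orbifold Watanabe-type formula (equivalently, the extension of the explicit section across orbifold singularities of $\cY$ sitting inside the cone). This is where the integrality $r \in \bZ$, rather than merely $r \in \bQ$, is essential: it is precisely the condition that promotes the conclusion from $\bQ$-Gorenstein to Gorenstein, and it is built into our setting via the primitivity of $c_1^{\orb}(\CL)$ derived from $H^*(M;\bZ) = 0$.
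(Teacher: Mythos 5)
Your argument is correct; it agrees with the paper on the Gorenstein half and diverges on the klt half. For Gorenstein, both you and the paper observe that on $\cC^\circ$ one has $K_{\cC^\circ}=\pi^*(K^{\orb}_\cY\otimes\CL)\cong\pi^*\CL^{1-r}\cong\cO_{\cC^\circ}$ (the pullback $\pi^*\CL$ being trivialized on $\cC^\circ$ by the tautological section), and then extend the trivialization over $o$ by normality; your explicit section $\pi^*\sigma\otimes\tau^{r-1}$ and the graded Watanabe-type identification $\omega_R\cong R(-r)$ are two ways of spelling out what the paper leaves implicit, and the integrality $r\in\bZ_{\ge 1}$ you correctly trace back to \Cref{prop-compactify}. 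For klt, you appeal directly to Koll\'{a}r's criterion for Seifert $\bC^*$-cones, whereas the paper gives a self-contained proof: it writes $K_{\cC'}=\mu_1^*K_\cC+(r-1)\cY_0$ for the contraction $\mu_1:\cC'\to\cC$, resolves $\cC'$, and bounds the exceptional discrepancies below by $-1$ using inversion of adjunction for the plt pair $(\cC',\cY_0)$ (valid because $\cY_0$ has only klt quotient singularities). The paper's route has a concrete additional payoff: the same divisorial bookkeeping immediately produces formula \eqref{eq-md1} expressing $\mathrm{md}(o,\cC)$ through discrepancies of $(\cC',(1-r)\cY_0)$, which feeds directly into the proof of \eqref{eqn:md}; your citation-based argument establishes klt just as validly, but a reader would then need a separate computation to obtain \eqref{eq-md1}.
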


\begin{proof}
The proof is similar to \cite[3.1]{Kol13}. 
Let $\cC'$ denote the total space of the orbifold line bundle $\pi_{\cC'}: \CL^{-1}\rightarrow \cY$.  Then we have the identity $K_{\cC'}\otimes \cO(\cY_0)=\pi_{\cC'}^*K^{\orb}_{\cY}$. 
Away from the zero section $\cY_0$ we have $K_{\cC^\circ}=\pi^* K^{\orb}_\cY=\pi^* (\CL^{-r})\cong \cO_{\cC^\circ}$ where $\pi: \cC^\circ\rightarrow \cY$ is the natural projection. Since $\cC$ is normal, $K_{\cC}$ is also trivial and hence $\cC$ is Gorenstein. 

Let $\mu_1: \cC'\rightarrow \cC$ be the contraction of the zero section. We have the identity:
\begin{equation*}
    K_{\cC'}=\mu_1^*K_{\cC}+(r-1)\cY_0
\end{equation*}
where $\cY_0$ is the zero section of $\CL^{-1}$. Let $\mu_2: \tilde{\cC}\rightarrow \cC'$ be the resolution of singularities with simple normal crossing exceptional divisors $\{F_i\}$, we get an identity similar to \eqref{eq-disc}: 
\begin{eqnarray*}
    K_{\tilde{\cC}}+\widetilde{\cY}_0&=&\mu_2^*(K_{\cC'}+\cY_0)+\sum_i a_i F_i\\
    &=&\mu_2^*\mu_1^*K_{\cC}+r \mu_2^*\cY_0+\sum_i a_i F_i\\
    &=&\mu^* K_{\cC}+r\widetilde{\cY}_0+\sum_i (a_i+r\cdot \mathrm{ord}_{F_i}(\mathcal{O}(-\cY_0))F_i. 
\end{eqnarray*}
Here $\cO(-\cY_0)$ denotes the ideal sheaf of $\cY_0\subset \cC'$ and $\widetilde{\cY}_0$ is the strict transform of $\cY_0$ under $\mu_2$ and $\mu=\mu_2\circ \mu_1: \tilde{\cC}\rightarrow \cC$ is a resolution of singularity. The coefficient $a_i=a(F, \cC', \cY_0)$ defined by the first equality is called the discrepancy of the exceptional divisor $F_i$ with respect to the pair $(\cC', \cY_0)$ (see \cite[Definition 2.4]{Kol13}). 
Note that $\cY_0$ has only quotient singularities which are klt. By the theorem of inversion of adjunction (see \cite[Theorem 4.9]{Kol13}), $(\cC', \cY_0)$ is plt (meaning ``purely log terminal", see \cite[Definition 2.8]{Kol13}) which implies $a_i=a(F_i, \cC', \cY_0)>-1$. 
So we get the formula:
\begin{equation*}
    \mathrm{md}(o\in \cC)=\min\{r-1, a_i+r \cdot \mathrm{ord}_{F_i}(\cO(-\cY_0))\}>-1
\end{equation*}
which implies that $o\in \cC$ is indeed klt (see Definition \ref{def-klt}). 
\end{proof}
By the linearity property of discrepancies (see \cite[Lemma 2.5]{Kol13}), $a(F_i, \cC', (1-r)\cY_0)=a(F_i, \cC', \cY_0)+r\cdot \mathrm{ord}_{F_i}(\cO(-\cY_0))$. Therefore, we can also write the minimal discrepancy of $o\in \cC$ as
\begin{equation}\label{eq-md1}
    \text{md}(o\in \cC)=\min\{ r-1, a(F_i, \cC', (1-r)\cY_0) \}. 
\end{equation}
We can then derive an algebraic formula for the minimal discrepancy of $o\in \cC$ by adapting the argument as in \cite[Proof of Theorem 3.21]{Kol13}. 
First note that $\cC'$ has only quotient singularities that are contained in $\cY_0$. Let $p\in \cC'$ be a quotient singularity given by $\pi_p: \bC^n\rightarrow \bC^n/G_p$ where $G_p$ is a cyclic subgroup of $U(n)$. According to the local classification of smooth Seifert $\bC^*$-bundles from \cite[25]{Kol04}, $\bC^n/G_p$ is isomorphic to $\bC^n/\frac{1}{m}(b_1,\dots, b_{n})$ that satisfies the condition that $\pi_p^*\cY_0=\{x_1=0\}$ with $\{x_1,\dots, x_n\}$ being linear coordinates on $\bC^n$ and $b_1$ is relatively prime to $m$. 
We have the following formula for the minimal discrepancy of $o\in \cC$, which can be seen as a generalization of Reid-Tai's criterion for klt quotient singularities to the case of isolated Fano cone singularities. 
\begin{prop}
With the above notations, we have the equality
\begin{equation}\label{eqn:md}
\mathrm{md}(o, \cC)=\min_{p,g}\left\{r, \frac{1}{m}\left(r w_1(g)+\sum_{i=2}^n w_i(g)\right)\right\}-1
\end{equation}
where $p$ ranges over all quotient singularities on $\cC'$ and $g$ ranges over all non-identity elements in the cyclic groups $G_p\cong \bZ_m$ and satisfies $g^*x_i=\epsilon^{w_i(g)} x_i$ with $\epsilon=e^{2\pi \sqrt{-1}/m}$ and $0\le w_i(g)<m$. 
\end{prop}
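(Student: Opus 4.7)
My plan is to apply \eqref{eq-md1} and compute the local discrepancy contribution at each cyclic quotient singularity $p\in \cC'$ via a toric calculation. All exceptional divisors $F_i$ of $\mu_2\colon \tilde{\cC}\to \cC'$ sit over quotient singularities contained in $\cY_0$, so the inner minimum in \eqref{eq-md1} decomposes as a minimum over $p$ and over exceptional divisors local to $p$. Fix such a $p$ with uniformizing chart $\pi_p\colon \bC^n\to \bC^n/G_p=\bC^n/\tfrac{1}{m}(b_1,\dots,b_n)$, $\gcd(b_1,m)=1$, and $\pi_p^*\cY_0=\{x_1=0\}$.

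I would then invoke the toric dictionary. The relevant lattice is $N=\bZ^n+\bZ\cdot \tfrac{1}{m}(b_1,\dots,b_n)$ with cone $\sigma=\bR^n_{\ge 0}$; exceptional divisors $E_v$ over $p$ correspond to primitive vectors $v=(v_1,\dots,v_n)\in N\cap \sigma^\circ$, and the Reid--Tai formula yields $a(E_v,\cC')=\sum_{i=1}^n v_i-1$. For the coefficient of $\cY_0$ at $E_v$, the condition $\gcd(b_1,m)=1$ forces $x_1^m$ to be the smallest $G_p$-invariant power of $x_1$, and $x_1^m$ vanishes to order $m$ along $\{x_1=0\}$ upstairs; hence $\cY_0=\tfrac{1}{m}\,\mathrm{div}(x_1^m)$ as a $\bQ$-Cartier divisor on $\bC^n/G_p$. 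Applying the monomial valuation associated with $v$ then gives $\mathrm{ord}_{E_v}(\cY_0)=\tfrac{1}{m}v(x_1^m)=v_1$. Combining,
\[
a(E_v,\cC',(1-r)\cY_0)=a(E_v,\cC')+(r-1)\,\mathrm{ord}_{E_v}(\cY_0)=rv_1+\sum_{i=2}^n v_i-1.
\]

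Next I minimize $rv_1+\sum_{i=2}^n v_i$ over primitive $v\in N\cap \sigma^\circ$. Because $o\in \cC$ is an isolated singularity, for every non-identity $g\in G_p$ one has $w_i(g)>0$ for all $i$, so $v_g:=\tfrac{1}{m}(w_1(g),\dots,w_n(g))\in \sigma^\circ$. Every $v\in N\cap \sigma^\circ$ decomposes uniquely as $v=v_g+u$ with $g\in G_p$ and $u\in \bZ_{\ge 0}^n$. Since $r\ge 1$, adding any nonzero $u$ strictly increases the target function, and the $g=e$ branch (i.e., $v\in \bZ_{>0}^n$) contributes at least $r+n-1$, which is strictly larger than every $g\ne e$ value since $w_i(g)<m$. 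Hence the minimum is attained at some $v=v_g$ with $g\ne e$, yielding $\tfrac{1}{m}\bigl(rw_1(g)+\sum_{i=2}^n w_i(g)\bigr)-1$. Feeding this back into \eqref{eq-md1} (together with the $r-1$ coming from $\widetilde{\cY}_0$) produces the claimed formula.

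The main subtle step will be pinning down $\mathrm{ord}_{E_v}(\cY_0)=v_1$ when $\cY_0$ is only $\bQ$-Cartier of index $m$; this is what twists the standard Reid--Tai age into the weighted expression $rw_1(g)+\sum_{i\ge 2}w_i(g)$. A secondary point is justifying the reduction from primitive interior lattice vectors to the $v_g$'s, which uses $r\ge 1$ together with isolatedness of the singularity.
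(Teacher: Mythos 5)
Your overall strategy---apply \eqref{eq-md1} and compute each local contribution at a quotient singularity $p\in\cC'$ via the toric/monomial-valuation calculus---is genuinely different from the paper, which instead pulls each exceptional divisor back to a $G_p$-equivariant resolution of the uniformizing chart $\bC^n$ and estimates via $g$-eigenfunctions for ``$\ge$,'' and exhibits a weighted blow-up for ``$\le$.'' But as written your argument has three concrete problems.

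First, the reduction to toric valuations is asserted without justification. You write that ``exceptional divisors $E_v$ over $p$ correspond to primitive vectors $v\in N\cap\sigma^\circ$''; this is true only for torus-invariant divisors, and a general resolution of $\cC'$ carries many non-toric exceptionals. That the minimal log discrepancy of a toric pair with invariant boundary is computed by toric valuations is a theorem (Borisov/Ambro/Musta\c{t}\u{a}) that must be invoked; the paper sidesteps it entirely by working with $G_p$-equivariant covers. Second, the claim ``because $o\in\cC$ is an isolated singularity, $w_i(g)>0$ for all $i$'' is false. Isolatedness of the vertex only forces $\gcd(b_1,m)=1$ on the fiber coordinate, hence $w_1(g)>0$ for all $g\ne e$; the base weights $w_i(g)$, $i\ge 2$, may vanish whenever $\cY$ has a positive-dimensional orbifold stratum. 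A clean counterexample: $\cY=\bP(1,1,2,2)$ with $\CL=\cO(1)$ and $\cC=\bC^4$ (weights $(1,1,2,2)$, $r=6$), where a generic point $p$ on the curve $\{z_1=z_2=0\}$ has $G_p=\bZ_2$ acting by $\tfrac12(1,1,1,0)$ on the fiber-plus-chart coordinates, so $w_4(g)=0$. Third, as a consequence, restricting the minimization to $v\in\sigma^\circ$ is wrong. In the example, your recipe forces $u_4\ge 1$ and gives a toric minimum of $\tfrac12(6+1+1+0)+1-1=4$, hence $\min\{5,4\}=4\neq 3=\mathrm{md}(0,\bC^4)$; the correct minimizer $v_g=\tfrac12(1,1,1,0)$ lies on a boundary face of $\sigma$, its divisor has center a positive-dimensional stratum of $\cY_0$, and it is still exceptional over $o$ because $\mu_1$ contracts all of $\cY_0$. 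The fix is to minimize over primitive $v\in N\cap\sigma$ with $v_1>0$ and $v\neq e_1$; then the decomposition $v=v_g+u$ with $u\in\bZ_{\ge 0}^n$ is correct, for $g\neq e$ the minimum is $u=0$ because $v_{g,1}=w_1(g)/m>0$ automatically, and the $g=e$ branch contributes $\ge r+1>r$. With those repairs and a citation for the toric mld fact, your route closes; as written, it does not.
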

\begin{proof}
We pick any exceptional divisor $F$ over $\bC^n/G_p$ dominated by an exceptional divisor $E$ over $\bC^n$ that is pointwise fixed by a cyclic subgroup $\la g\ra$ of order $k>1$. Assume $m=k\ell$. We have the equality
$a(F, \bC^n/G_p, (1-r)\cY_0)+1=k^{-1} (a(E, \bC^n, \Delta')+1)$ where $\Delta'=(1-r)\pi_p^*\cY_0$ (see \cite[(2.42.4)]{Kol13}).  
At a general point of $E$ contained in a birational model over $X$, choose local coordinates $y_1, \dots, y_n$ such that $E=\{y_1=0\}$, $g^*y_1=\epsilon^\ell y_1$ is a $g$-eigenfunction (up to a change of the generator of $\la g\ra$). 
Using the linear coordinates $\{x_i\}$ on $\bC^n$ as above such that $\pi_p^*\cY_0=\{x_1=0\}$, we can write $g^* x_i=y_1^{c_i}u_i$ where the $u_i$ are units. Thus (the pullback of) $g^* (dx_1\wedge \cdots \wedge dx_n)$ vanishes to order equal to $(-1+\sum_i c_i)$ along $E$. By the definition of discrepancy (see remark \ref{rem-disc}), we get $a(E, \bC^n)=\sum_i c_i-1$, and by the linearity property $a(E, \bC^n, \Delta')+1=a(E, \bC^n)+1+(r-1) c_1\ge \sum_i c_i+(r-1)c_1$. 
 If $g^* x_i =\epsilon^{w_i} x_i$ with $0\le w_i<m$, then $\ell c_i\equiv w_i \text{ mod } m$. So we get:
\begin{eqnarray*}
 a(F, \bC^n/G_p, (1-r)\cY_0)+1&=&\frac{1}{k}(a(E, \bC^n,\Delta')+1) \\
 &\ge& \frac{1}{k}(\sum_i c_i+(r-1) c_1) \ge \frac{1}{k\ell}(\sum_{i=1}^n w_i+(r-1)w_1)\\
 &=&\frac{1}{m}\left(r w_1(g)+\sum_{i=2}^n w_i(g)\right).
\end{eqnarray*}
Because of \eqref{eq-md1}, this proves that the left-hand side of \eqref{eqn:md} is greater than the right-hand side. 

Conversely any element $g\in G_p$ generates a cyclic subgroup which acts faithfully on $\bC^n$ as $\bC^n/\frac{1}{m}(w_1,\dots, w_n)$ where $\cY_0=\{x_1=0\}$. We want to show that $\frac{1}{m}(\sum_i w_i+(r-1) w_1)\ge \mathrm{md}(o, \cC)+1$.  
If $\mathrm{gcd}(w_1,\dots, w_n)=\ell>1$, then the element $\frac{1}{m}(w_1/\ell, \cdots, w_n/\ell)$ contributes a smaller number to the right-hand-side. So we assume that $\mathrm{gcd}(w_1,\dots, w_n)=1$. Consider the weighted blow-up $\bC^n$ with weights $(w_1, \dots, w_n)$. By \cite[pg. 106]{Kol13}, a local chart is given by 
\begin{equation*}
    f: \bC^n_{\mathbf{z}}/\frac{1}{w_1}(1, -w_2,\dots, -w_n)\rightarrow \bC^n_{\mathbf{x}} 
\end{equation*}
with $f^*x_1=z_1^{w_1}$, $f^*x_i=z_1^{w_i} z_i$ for $i\ge 2$. 
Let $E$ denote the exceptional divisor of this weighted blowup that dominates an exceptional divisor $F$ over $\bC^n/G$. 
Because
$f^* (x_1^{r-1} dx_1\wedge \cdots\wedge dx_n)=w_1 z_1^{-1+(r-1)w_1+\sum_{i=1}^n w_i}dz_1\wedge\cdots \wedge dz_n$ and the covering $f$ is generically unramified along the exceptional divisor $\{z_1=0\}$, we get
\begin{equation*}
    a(F, \bC^n/G_p, (1-r)\cY_0)+1=\frac{a(\{z_1=0\},\bC^n, (1-r) \{z_1=0\})+1}{m}=\frac{1}{m}((r-1)w_1+\sum_{i=1}^n w_i). 
\end{equation*}
By \eqref{eq-md1} we get the formula \eqref{eqn:md}.
\end{proof}

\begin{exmp}
Set $(\cC, o)=(\bC^n, 0)$ and let $\bC^*$ act on $\bC^n$ by $t\cdot (z_1, \dots, z_n)=(t^{a_1}z_1, \dots, t^{a_n} z_n)$ with $a_i\in \bZ_{>0}$, $ a_1\ge \dots \ge a_n $ and $\gcd(a_1,\dots, a_n)=1$. 
The corresponding orbifold $\cY=(\bC^n\setminus 0)/\bC^*=\bP(a_1, \dots, a_n)$ is the weighted projective space with the Fano index $r=\sum_{j=1}^n a_j$ which is always greater than $n$ except for $\bP^{n-1}=\bP(1,\dots, 1)$. It is easy to verify that the minimum on the right-hand-side of \eqref{eqn:md} is obtained when $p=[1, 0,\dots, 0]$ and $g=\epsilon=e^{2\pi\sqrt{-1}/a_1}\in G_p=\la \epsilon\ra\cong \bZ_{a_1}$. Indeed, we have $w_1(g)=1$, $w_i(g)=a_1-a_i$ for $2\le i\le n$ and hence
\begin{eqnarray*}
    \frac{1}{m}\left(r w_1(g)+\sum_{i=2}^n w_i(g)\right)&=&\frac{1}{a_1}\left(\sum_{j=1}^n a_j+\sum_{i=2}^n (a_1-a_i)\right)\\
    &=&n=\mathrm{md}(0, \bC^n)+1
\end{eqnarray*}
\end{exmp}

    \subsection{Conley-Zehnder indices and SFT degrees}\label{ss:CZ}
For later purposes, we briefly review the basics of Conley-Zehnder indices and related concepts. Given a path $\phi:[0,1] \to \Sp(2n)$ in the symplectic matrices, such that $\phi(0)=\Id$ and $\phi(1)\in \Sp^*(2n):=\left\{A\in \Sp(2n)\left| \det(A-\Id)\ne 0\right.\right\}$, we can define the integral Conley-Zehnder index $\mu_{\CZ}(\phi)$, which is characterized by the following properties \cite[Proposition 8]{Gutt14}:
\begin{itemize}
    \item[(P1)] Given any path $\psi:[0,1]\to \Sp(2n)$, we have $\mu_{\CZ}(\psi \phi \psi^{-1})=\mu_{\CZ}(\phi)$;
    \item[(\namedlabel{p:homotopy}{P2})]The Conley-Zehnder index is homotopy invariant for homotopies of paths from $\Id$ to $\Sp^*(2n)$;
    \item[(\namedlabel{p:sum}{P3})] Given paths $\phi_1,\phi_2$ in $\Sp(2n_1),\Sp(2n_2)$ respectively, then the path $\phi_1\oplus \phi_2$ in $\Sp(2n_1+2n_2)$ has Conley-Zehnder index $\mu_{\CZ}(\phi_1)+\mu_{\CZ}(\phi_2)$;
    \item[(\namedlabel{p:loop}{P4})] Given a loop $\psi:[0,1]\to \Sp(2n)$ with $\psi(0)=\psi(1)=\Id$, then 
    $$\mu_{\CZ}(\phi\psi)=\mu_{\CZ}(\phi)+2\mu(\psi)$$
    where $\mu(\psi)$ is the Maslov index of the loop $\psi$, i.e.\ the degree of $S^1\to \Sp(2n)\to U(n)\stackrel{\det_{\bC}}{\longrightarrow} U(1)$, here $\Sp(2n)\to U(n)$ is the homotopy inverse of the inclusion $U(n)\to \Sp(2n)$; 
    \item[(\namedlabel{p:det}{P5})] $(-1)^{n-\mu_{\CZ}(\phi)} = \mathrm{sign}\det(\Id-\phi(1))$;
    \item[(\namedlabel{p:sign}{P6})] Let $S$ be a symmetric matrix such that $|S|<2\pi$, for $\phi(t)=\exp(tJS)$ where $J$ is the standard complex structure on $\bC^n\simeq \bR^{2n}$, we have $\mu_{\CZ}(\phi)$ is half of the signature of $S$.
\end{itemize}
Properties \eqref{p:homotopy}, \eqref{p:loop} and \eqref{p:sign} determine Conley-Zehnder index uniquely, see \cite[Proposition 9]{Gutt14}.

The Conley-Zehnder indices have the following generalizations.
\begin{itemize}
    \item[(\namedlabel{p:RS}{P7})] Conley-Zehnder index has a generalization called the Robbin-Salamon index $\mu_{\RS} \in \frac{1}{2}\bZ$ defined for any path in $\Sp(2n)$. One important feature of $\mu_{\RS}$ is that it is additive w.r.t.\ concatenation of paths \cite[Property 4 of Lemma 26]{Gutt14}. By property \eqref{p:loop} above, we have $\mu_{\RS}(\psi)=2\mu(\psi)$ for a loop $\psi$. Moreover, we have the following normalization formula for the path in $U(1)\subset Sp(2)$:
    \begin{equation}\label{eq-RSnorm}
    \mu_{\RS}(\{e^{i\theta}; 0\le \theta\le T\})=
    \left\{
        \begin{array}{ll}
           \frac{T}{\pi}, & \text{if } T\in 2\pi \bZ \\ 
           2 \lfloor \frac{T}{2\pi}\rfloor+1,  & \text{if } T\not\in 2\pi \bZ.
        \end{array}
        \right.
    \end{equation}
    \item[(\namedlabel{p:RS2}{P8})] $\mu_{\CZ}$ has a lower semi-continuous extension $\mu_{\LCZ}\in \bZ$ to all paths in $\Sp(2n)$ starting from $\Id$ \cite[Definition 2.11]{McL16}. By \cite[Lemma 4.10]{McL16}, \
    \begin{equation}\label{eqn:LCZ}
        \mu_{\LCZ}(\phi)=\mu_{\RS}(\phi)-\frac{1}{2}\dim \ker (\phi(1)-\Id).
    \end{equation}
\end{itemize}
We now turn to the geometric setting. For a contact manifold $(M^{2n-1},\eta)$, let $\phi_t$ denote the flow generated by the Reeb vector field $\RV$.
For a closed Reeb orbit $\gamma$, after choosing a trivialization of $\gamma^*(\xi:=\ker \eta)$, the linearized flow gives rise to a path in $\Sp(2n-2)$. By properties \eqref{p:homotopy} and \eqref{p:loop}, the Conley-Zehnder index (including $\mu_{\LCZ},\mu_{\RS}$) only depends on the induced trivialization $\tau$ of $\det_{\bC}\xi$, which will be denoted by $\mu_{\CZ}^{\tau}(\gamma)$ ($\mu^{\tau}_{\LCZ}(\gamma),\mu^{\tau}_{\RS}(\gamma)$) respectively. To get Conley-Zehnder indices for all Reeb orbits, we have the following:
\begin{enumerate}[(i)] 
    \item For any Reeb orbit $\gamma$, $\mu^{\bZ_2}_{\LCZ}(\gamma)\in {\bZ_2}$ is well-defined. This is defined by $\mu_{\LCZ}^{\tau}(\gamma) \mod 2$ for any trivialization $\tau$ of $\det_{\bC}\gamma^*\xi$. The well-definedness follows from Property \eqref{p:loop}.
    \item If $c_1(\xi) =0$, $\det_{\bC} \xi$ can be trivialized over $M$ (the set of trivializations is parameterized by $H^1(M;\bZ)$). After choosing a trivialization $\tau$ of  $\det_{\bC} \xi$,  we get $\mu^{\tau}_{\LCZ}(\gamma)$ for all Reeb orbits and it is independent of $\tau$ if $[\gamma]=0\in H_1(M;\bQ)$. In the case, $\mu^{\bZ_2}_{\LCZ}(\gamma)=\mu_{\LCZ}(\gamma) \mod 2$.
    \item If $c^{\bQ}_1(\xi) =0$ in $H^2(M;\bQ)$, we can trivialize $\det_{\bC} \oplus^N \xi$ for some $N\in \mathbb{Z}_{>0}$, after choosing a trivialization $\tau$, this allows us to define $\mu_{\LCZ}^{\bQ,\tau}(\gamma)=\mu_{\LCZ}^{\tau}(\oplus^N \phi)/N\in \bQ$, where $\phi$ is the linearized flow along $\gamma$. This definition is independent of $N$ and is moreover independent of $\tau$ if $[\gamma]=0\in H_1(M;\bQ)$ \cite[\S 3.3.2]{gironella2021exact}. This rational index coincides with the integer index above when $c_1(\xi)=0$. In general, one should think $(\mu^{\bQ}_{\LCZ},\mu^{\bZ_2}_{\LCZ})\in \bQ\times \bZ_2$ is a pair of indices associated to an orbit. 
\end{enumerate}
\begin{defn}\label{def:lSFT}
    Let $(M, \xi)$ be a $2n-1$ dimensional contact manifold with rational first Chern class $c^{\bQ}_1(\xi)=0$ and $H^1(M;\bQ)=0$. For any Reeb orbit $\gamma$ for a fixed contact form $\eta$, we define its lower SFT degree 
    $$\lSFT_\eta(\gamma):=\mu^{\bQ}_{\LCZ}(\gamma)+n-3.$$ 
\end{defn}
Under the same assumption, McLean\footnote{The Robbin-Salamon index was denoted as $\CZ(\gamma)$ in \cite{McL16}} \cite{McL16} defined the highest minimal index by
\begin{equation}\label{eq-hmi}\hmi(M):=\sup_{\eta}\inf_{\gamma} \left\{\lSFT_\eta(\gamma)\right\}\in \bR \cup \{\pm \infty\},
\end{equation}
where $\eta$ ranges over all contact forms and $\gamma$ ranges over all Reeb orbits of $\eta$. When $M$ is the contact link of an isolated $\bQ$-Gorenstein normal singularity, McLean \cite[Theorem 1.1]{McL16} proved that $\hmi(M)$ is twice the minimal discrepancy number if $\hmi(M)\ge 0$ and  $\hmi(M)<0$ is equivalent to that the minimal discrepancy number is negative.

\section{Proof of main results}
In the following, we study the Fano index of the orbifold $\cY$ as well as the minimal discrepancy number of the cone singularity $\cC(Y,\CL)$ using Floer cohomology from symplectic geometry to finish the proof of the main theorem.

\subsection{Quasi-regular Sasaki manifolds}
Returning to the quasi-regular Sasaki case, the contact manifold $M$ has a contact form $\eta$ whose Reeb vector field $\RV$ generates an $S^1$-action and $\cY=M/S^1$ is a symplectic orbifold. The standing topological assumption in this subsection is that 
$$c^{\bQ}_1(\xi)=0.$$ This is a consequence of the cone singularity being $\bQ$-Gorenstein and
guarantees $\bQ$-valued indices ($\mu_{\LCZ},\mu_{\RS}$) after choosing a trivialization of $\det_{\bC}\oplus^N \xi$. In fact, by \cite[Lemma 3.3]{McL16} and \cite[Corollary 5.17]{BdFFU}, it is equivalent to the singularity being $\bQ$-Gorenstein given that the singularity is klt. In the special case here, we do not need to assume $H^1(M;\bQ)=0$, as all Reeb orbits of a conic contact form are rationally null-homologous, making the index independent of the trivialization. Moreover, in the question of compactification of $\bC^n$, we have $H^2(M;\bZ)=0$ by \Cref{prop-compactify} guaranteeing $\bZ$-valued indices. Let $N$ be a positive integer such that $Nc_1(\xi)=0$. Note that for any $x\in M$, the direct sum of $N$ copies of the linearization of the $S^1$-action along the orbit $S^1\cdot x$ gives rise to a loop in $\Sp((2n-2)N)$ by trivializing $\oplus^N\xi \subset TM$ along the orbit, where the trivialization matches with the unique (by our topological assumptions) global trivialization of $\det_{\bC} \oplus^N \xi$. We use $R(M)$ to denote the Maslov index $\mu$ of this loop divided by $N$, which is independent of the point $x$ by the homotopy invariant property of the Maslov index.

\begin{lem}\label{lem:rational_change}
    Let $(M,\xi)$ be a contact manifold with $c_1^{\bQ}(\xi)=0$ and $W$ a symplectic filling. For a rationally null-homologous Reeb orbit $\gamma$, and a bounding surface $u:S\to W$ such that $\partial u = \gamma$, using the unique trivialization of $u^*\det_{\bC}TW$ over $S$, we get an integer lower Conley-Zehnder index $\mu^u_{\LCZ}(\gamma)$. The rational lower Conley-Zehnder index is given by $$\mu^{\bQ}_{\LCZ}(\gamma)=\mu_{\LCZ}^u(\gamma)-2\la c_1(W), u_*[S]\ra,$$ 
    where we view $c_1(W)$ as in $H^2(W,M;\bQ)$ as $c_1^{\bQ}(\xi)=0$ and $u_*[S]\in H_2(W,M;\bZ)$. Alternatively, if $A\in H_2(W;\bQ)$ satisfy $[A]=u_*[S]$ in $H_2(W,M;\bQ)$ (such $A$ exists as $u_*[\partial S]=0\in H_1(M;\bQ)$), then the rational lower Conley-Zehnder index $\mu^{\bQ}_{\LCZ}(\gamma)=\mu_{\LCZ}^u(\gamma)-2\la c_1(W),A \ra$.
\end{lem}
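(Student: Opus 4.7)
The plan is to reduce the formula to the change-of-trivialization law for Conley--Zehnder-type indices (Property~\eqref{p:loop}) combined with the standard identification of the Maslov index of a trivialization loop with a relative first Chern number.

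First I would set up the two trivializations to compare. Along $\gamma$, the decomposition $TW|_M\cong \xi\oplus \underline{\bC}$ (with $\underline{\bC}$ spanned by the Reeb and Liouville directions) identifies $\det_{\bC}TW$ with $\det_{\bC}\xi$, so I can freely interchange trivializations of either. I would pick $N>0$ with $Nc_1(\xi)=0$, fix a trivialization of $\det_{\bC}\oplus^N\xi\cong (\det_{\bC}\xi)^{\otimes N}$ over $M$, and let $\tau_{\bQ}$ be its restriction to $\gamma$, so that $N\mu^{\bQ}_{\LCZ}(\gamma)=\mu^{\tau_{\bQ}}_{\LCZ}(\oplus^N\phi)$ with $\phi$ the linearized Reeb flow. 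The trivialization $\tau_u$ will be obtained by extending $u^*\det_{\bC}TW$ over $S$ (possible since $S$ retracts onto a $1$-complex) and restricting to $\partial S=\gamma$; additivity~\eqref{p:sum} then gives $N\mu^u_{\LCZ}(\gamma)=\mu^{\tau_u^{\otimes N}}_{\LCZ}(\oplus^N\phi)$.

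Next I would compare the two indices via the loop-change law. The two trivializations $\tau_u^{\otimes N}$ and $\tau_{\bQ}$ of $(\det_{\bC}\gamma^*\xi)^{\otimes N}$ differ by a loop $g:\gamma\to U(1)$. Property~\eqref{p:loop} for $\mu_{\CZ}$ transfers to $\mu_{\LCZ}$ via formula~\eqref{eqn:LCZ} (the defect $\tfrac{1}{2}\dim\ker(\phi(1)-\Id)$ being trivialization-invariant), so one obtains
\begin{equation*}
\mu^{\tau_u^{\otimes N}}_{\LCZ}(\oplus^N\phi)-\mu^{\tau_{\bQ}}_{\LCZ}(\oplus^N\phi)=2\deg(g).
\end{equation*}
The hard part, and essentially the only geometric input, will be to identify $\deg(g)=\la Nc_1(W),u_*[S]\ra$. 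The winding number $\deg(g)$ measures the obstruction to extending $\tau_{\bQ}|_\gamma$ as a trivialization of $(\det_{\bC}u^*TW)^{\otimes N}$ over all of $S$, and $\tau_u^{\otimes N}$ provides such an extension, so this obstruction equals the relative first Chern number
\begin{equation*}
\la c_1((\det_{\bC}u^*TW)^{\otimes N},\tau_{\bQ}),[S,\partial S]\ra=\la Nc_1(W),u_*[S]\ra,
\end{equation*}
where $Nc_1(W)\in H^2(W,M;\bZ)$ is the canonical lift arising from $Nc_1(W)|_M=Nc_1(\xi)=0$. Dividing by $N$ then yields the stated identity.

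Finally, for the alternative formulation, I would produce $A\in H_2(W;\bQ)$ lifting $u_*[S]\in H_2(W,M;\bQ)$ via the exact sequence $H_2(W;\bQ)\to H_2(W,M;\bQ)\to H_1(M;\bQ)$, using that $\partial u_*[S]=[\gamma]=0\in H_1(M;\bQ)$ by hypothesis; naturality of the pairings under $H^2(W,M;\bQ)\to H^2(W;\bQ)$ and $H_2(W;\bQ)\to H_2(W,M;\bQ)$ then gives $\la c_1(W),A\ra=\la c_1(W),u_*[S]\ra$. The primary technical care in the whole argument lies in the sign conventions of the Maslov/$c_1$ correspondence; this is standard in Floer theory but requires attention to orientations and the direction of the loop $g$. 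A useful sanity check is that replacing the trivialization of $(\det_{\bC}\xi)^{\otimes N}$ over $M$ by another one changes $\tau_\bQ$ by an element of $H^1(M;\bZ)$ whose pairing with $[\gamma]=0\in H_1(M;\bQ)$ vanishes, confirming the well-definedness of $\mu^{\bQ}_{\LCZ}(\gamma)$.
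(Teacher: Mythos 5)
Your proposal is correct and follows essentially the same route as the paper's proof: both compare the trivialization of $\det_{\bC}\gamma^*\xi^{\oplus N}$ induced by $u$ with the global trivialization of $\det_{\bC}\oplus^N\xi$, invoke Property (4) of the Conley--Zehnder index (which transfers to $\mu_{\LCZ}$ since the kernel correction in \eqref{eqn:LCZ} is trivialization-independent), and identify the Maslov index of the change-of-trivialization loop with $N\langle c_1(W),u_*[S]\rangle$. The paper states this in two sentences; your write-up usefully makes explicit the $\xi\oplus\underline{\bC}$ decomposition identifying $\det_{\bC}TW|_M$ with $\det_{\bC}\xi$, the relative-Chern-class interpretation of the winding number, and the well-definedness check for rationally null-homologous $\gamma$, but there is no substantive divergence in method.
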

\begin{proof}
    Assume $Nc_1(\xi)=0$, and write $\phi(t)$ as the linearized flow of the Reeb flow. We then apply Property \eqref{p:loop} to $\oplus^N \phi(t)$ for trivializations from $u$ and the global trivialization of $\det_{\bC}\oplus^N \xi$, which are differed by $2N\la c_1(W),u_*[S] \ra$.
\end{proof}

Under the above setup, by the $\bQ$-Gorenstein property or $c_1^{\bQ}(\xi)=0$, there is $r(\cY, \CL)\in \bQ$ such that 
$-K_{\cY}=r(\cY,\CL)\CL$. 
In the case of \Cref{prop-compactify}, $r(\cY,\CL)$ is the Fano index of $\cY$.

\begin{prop}\label{prop:Fano_Maslov}
    Under the setup above, we have the identity $R(M)=r(\cY,\CL)$. 
\end{prop}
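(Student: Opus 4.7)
The plan is to compute $R(M)$ by picking a regular Reeb orbit and comparing the linearized flow in two natural frames of $\xi$ along it. Fix a regular fiber $\gamma = \pi^{-1}(y)$ with $y \in \cY$ a smooth point, parametrized as $\gamma(\phi) = e^{i\phi} v_0$ with $v_0 \in M \cap \CL^{-1}_y$. Because the Reeb flow on $\gamma$ is just the $S^1$-rotation, the time-$T$ return map $\phi_T$ is the identity of $M$, so the linearized flow $t \mapsto d\phi_t|_\xi$ is a genuine loop in $\Sp(2n-2)$; I want to show that, after $N$-fold tensoring to match the global trivialization of $\det_{\bC}^{\otimes N}\xi$, the associated loop has Maslov index $Nr$.

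I will exhibit two frames of $\xi|_\gamma$. The \emph{horizontal frame}: since $d\pi : \xi \to \pi^* T\cY$ is $S^1$-equivariant, pulling back a fixed basis of $T_y\cY$ produces an $S^1$-invariant frame along $\gamma$, and because $\pi \circ \phi_t = \pi$ the flow in this frame is the constant identity, giving Maslov index zero. The \emph{tautological frame} of $\det_{\bC} \xi \cong \pi^* \CL^r$: on $\cCo$, which is the complement of the zero section in the total space of $\CL^{-1}$, the tautological section $\tau(v) = v$ of $\pi^* \CL^{-1}$ is nowhere zero, so $\tau^{-r}$ is a nowhere-zero section of $\pi^* \CL^r = \det_{\bC} \xi$ on $\cCo$ and hence on $M \subset \cCo$. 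Along $\gamma$ one computes
\[
\tau^{-r}(\gamma(\phi)) = (e^{i\phi} v_0)^{-r} = e^{-ir\phi}\, v_0^{-r},
\]
so the tautological frame winds $-r$ times relative to the constant horizontal frame $v_0^{-r}$ of $\det_{\bC}\xi|_\gamma = (\CL^r)_y$.

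To finish, I use the change-of-frame formula: changing the frame by a loop $B$ shifts the Maslov index of any flow loop by $-\deg(\det_{\bC} B)$ (by multiplicativity of $\det_{\bC}$ and the characterization of the Maslov index as $\deg \det_{\bC}$). Thus the flow loop in the tautological frame has Maslov index $r$, and replacing $\xi$ by $\bigoplus^N \xi$ scales both winding and dimension by $N$, yielding $R(M) = r = r(\cY, \CL)$. The tautological frame may differ from the canonical global trivialization by an $H^1(M;\bZ)$-class, but this ambiguity is invisible to the index for a rationally null-homologous orbit (cf.\ \Cref{lem:rational_change}); restricting attention to a regular orbit is justified by homotopy invariance of the Maslov index. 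The main subtle point is this change-of-frame computation and verifying its compatibility with the $\bQ$-graded conventions, together with confirming that $\tau^{-r}$ is indeed the correct global frame up to the allowed ambiguity.
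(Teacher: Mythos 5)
Your proposal is correct and takes a genuinely different route from the paper's proof.

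The paper works in the symplectization (or equivalently in the filling $W=\CL^{-1}$), trivializing the full symplectic tangent bundle along a principal orbit with the bounding fiber disk $D$; in that frame the flow is a single full rotation in the disk plane, so the Maslov index is $1$, and the answer is then obtained by correcting with the pairing $\langle c_1(W), A\rangle$ for a closed rational cycle $A$ homologous to $D$ (this is exactly the cohomological change-of-trivialization formula from \Cref{lem:rational_change}, applied with $\langle c_1(\cY),A\rangle=-r$, $\langle c_1(\CL^{-1}),A\rangle=1$). You instead stay entirely on $M$, exhibit an explicit global trivialization of $(\det_\bC\xi)^{\otimes N}\cong\pi^*\CL^{rN}$ via the tautological section $\tau^{-rN}$, and read off the Maslov index as the winding of that frame relative to the constant horizontal frame along the fiber, bypassing both the bounding disk and the $c_1$-pairing. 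Your approach is more intrinsic and avoids having to identify a closed cycle $A$; what the paper's route buys instead is that it delegates the global-trivialization bookkeeping to the already-proved \Cref{lem:rational_change}, so it does not need to argue separately that $\tau^{-rN}$ is the right frame. The two places you flag as needing care --- that the tautological frame gives the canonical global trivialization only up to an $H^1(M;\bZ)$ twist, and that the horizontal frame is a \emph{constant} multiple of $v_0^{-r}$ (not literally $v_0^{-r}$) under the isomorphism $\det_\bC\xi\cong\pi^*\CL^r$ --- are real but harmless for exactly the reasons you indicate: the twist is invisible for a rationally null-homologous orbit, and a constant multiple does not affect the winding. Also, your phrase ``$N$-fold tensoring'' should be read as passing to $\oplus^N\xi$, whose complex determinant is $(\det_\bC\xi)^{\otimes N}$; and $\tau^{-r}$ should strictly be $\tau^{-rN}$ when $r\notin\bZ$, but the degree computation divides out the $N$ correctly.
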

\begin{proof}
    We can compute the Maslov index by the induced $S^1$-symplectic action on the symplectization $\bR_s\times M$, where the symplectic tangent bundle along an orbit is the contact structure direct sum with $\langle \RV, \partial_s\rangle$. Along with the obvious trivialization of $\langle \RV, \partial_s\rangle$, the linearized action along the orbit has an extra block of $2\times 2$ identity matrix compared to the contact case. In particular, the Maslov index does not change. If we compute the Maslov index using the induced trivialization from the bounding disk $D$--the fiber disk--in the orbifold line bundle $\CL^{-1}$, then the loop of the symplectic matrices is just a full rotation in the two coordinates in the disk $D$, hence the Maslov index is $1$. As $2R(M)-n+1$ is $\mu^{\bQ}_{\LCZ}$ of the principal Reeb orbit (the $S^1$ orbit that is not multiply covered) on $M$, $R(M)$ enjoys the same property as $\mu^{\bQ}_{\LCZ}$ in \Cref{lem:rational_change} without the factor $2$. Let $A$ be a closed rational chain whose intersection number with $\cY$ is $1$, we may view $A$ as contained in the zero section in $\CL^{-1}$. Then we have $[A]=[D]$ in $H_2(\CL^{-1},M;\bQ)$ and $\langle c_1(\CL^{-1}), A \rangle = 1$. We have $R(M)=1-\la c_1(Tot(\CL^{-1})), A \ra= 1-\langle c_1(\cY)+c_1(\CL^{-1}), A \rangle=r(\cY,\CL)$, where $Tot(\CL^{-1})$ is the total space of $\CL^{-1}$ as a complex orbifold.
\end{proof}

We consider the finite set $S$ of isotropy groups (including the trivial group, which is the isotropy group for a generic point) of the $S^1$ action on $M$. The set $S$ is equipped with a partial order, we say $G_x>G_y$ if $G_y\subset G_x\subset S^1$ is a subgroup. For $G\in S$,  the quotient of the fixed point set $M^G/S^1$ gives rise to a branch locus $\cY_G$ of the quotient K\"ahler orbifold $\cY=M/\langle \RV \rangle$ giving $M$ a stratification over the partial order set $S$. For non-minimal $G\in S$, we use $G^-$ to denote the unique maximal element that is smaller than $G$. We formally define $G^-=\emptyset$ when $G$ is the minimal element of $S$. We may rescale the contact form, such that the principal orbit (the simple Reeb orbit over a non-singular point of $M/S^1$) has a period $1$. For $G\in S$, we write $\cY_G=\sqcup_{i\in C_G}\cY^i_G$ as the decomposition into disconnected components. $M^G$ has a similar decomposition into $\sqcup_{i \in C_G} M^G_i$

\begin{prop}\label{prop:Reeb_MB}
The Reeb flow is Morse-Bott non-degenerate with the following properties:
\begin{enumerate}
    \item The Morse-Bott families of Reeb orbits are parameterized by $G\in S$, $k\in G\backslash G^-$ and $\ell \in \bZ_{\ge 0}$, where the family of parameterized Reeb orbit is diffeomorphic to $M^G$ via the starting point of the Reeb orbit and the period of the orbit is $\ell+\frac{k}{|G|}$ (we parameterize $S^1$ by $[0,1]$). The corresponding Morse-Bott family of unparameterized Reeb orbits is diffeomorphic to the orbifold $M^G/S^1$, which is a branch locus $\cY_G$ of the base orbifold $\cY$ with isotropy $G$. We will call $(G,k,\ell,i)$ the signature of such a Reeb orbit in the component $M^G_i$, where $i\in C_G$. When $G=\{e\}$, we have $\ell\ge 1$, corresponding to $\ell$th cover of a principal orbit.
    \item The lower Conley-Zehnder index $\mu_{\LCZ}(G,k,\ell,i)$  and  Robbin-Salamon index $\mu_{\RS}(G,k,\ell,i)$  of orbits with signature $(G,k,\ell,i)$ satisfy the following properties:
    \begin{enumerate}
        \item $\mu_{\LCZ}(G,k,\ell,i)=\mu_{\RS}(G,k,\ell,i)-\dim_{\bC}{\cY^i_G}$;
        \item $\mu_{\RS}(\{e\},0,\ell)=2\ell R(M)$ (there is only one component for $\pi_0(\cY_{\{e\}}=\cY)$);
        \item $\mu_{\RS}(G,k,\ell,i) = \mu_\RS(G,k,0,i)+2\ell R(M)$;
        \item\label{p:d} $\mu^{\bZ_2}_{\LCZ}(G,k,\ell,i)=  n-1$. 
    \end{enumerate}
    \item If $R(M)\ge 0$, then 
    $$\inf_{\gamma}(\lSFT(\gamma)) = \min_{G,k,i} \{ \mu_{\RS}(G,k,0,i)-\dim_{\bC}\cY^i_G+n-3\}\cup \{2R(M)-2\},$$
    where $\gamma$ ranges over Reeb orbits of $\eta$.
\end{enumerate}
\end{prop}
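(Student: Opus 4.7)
The plan is to identify the Reeb flow with the $S^1$-action generated by $\RV$, so that closed orbits and their linearizations can be read off directly from the isotropy stratification of $M$.

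\textbf{Morse-Bott structure.} A point $x\in M$ lies on a closed Reeb orbit of period $T$ if and only if $e^{2\pi i T}\in G_x$. Decomposing by exact isotropy $G\in S$, the new periods contributed by $G$ (those not already arising from the proper subgroup $G^-$) are $k/|G|$ with $k\in G\setminus G^-$, together with integer shifts $\ell\in\bZ_{\ge 0}$. This exhausts the closed orbits and produces the parameterization claimed in (1). To verify Morse-Bott non-degeneracy, observe that along an orbit through $x\in M^G_i$, the $1$-eigenspace of the linearized return map on $\xi$ contains the tangent directions to $M^G_i$ modulo $\RV$, of real dimension $2\dim_{\bC}\cY^i_G$; no further $1$-eigenvectors appear because $G$ acts faithfully on the normal directions to $M^G_i\subset M$ and the return map on that normal slice coincides with the action of the element $e^{2\pi i k/|G|}\in G\setminus G^-$.

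\textbf{Index formulas.} Choose the compatible $J$ from the Sasaki structure; since $\RV$ is holomorphic for $J$, the linearized Reeb flow on $\xi$ is complex linear, giving a path in $U(n-1)\subset\Sp(2n-2)$ in any complex trivialization of $\gamma^*\xi$. Combining \eqref{eqn:LCZ} with the $1$-eigenspace computation of the previous step immediately yields (2a). For (2d), the inclusion in $U(n-1)$ gives $\det_{\bR}(\Id-\phi)=|\det_{\bC}(\Id-\phi)|^2\ge 0$, so property \eqref{p:det} forces $\mu_{\CZ}\equiv n-1\pmod 2$ in the non-degenerate case; the same parity descends to $\mu_{\LCZ}$ (hence to $\mu^{\bZ_2}_{\LCZ}$) via the lower semicontinuous extension, approaching the degenerate path by paths still inside $U(n-1)$. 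For (2b), $\ell$ full turns of the Reeb flow in the chosen global trivialization of $\det_{\bC}\oplus^N\xi$ contribute $\mu_{\RS}=2\ell R(M)$ directly from Proposition \ref{prop:Fano_Maslov}. For (2c), concatenate the partial-period path of fractional length $k/|G|$ with $\ell$ full turns and apply additivity of $\mu_{\RS}$ under concatenation (property \eqref{p:RS}) together with (2b).

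\textbf{Infimum computation.} Combining (2a) and (2c),
\[
\lSFT(G,k,\ell,i) = \mu_{\RS}(G,k,0,i)-\dim_{\bC}\cY^i_G+2\ell R(M)+n-3.
\]
Under $R(M)\ge 0$, the right-hand side is non-decreasing in $\ell$, so the infimum is achieved at $\ell=0$ for every $(G,k,i)$ except $(G,k)=(\{e\},0)$, for which the orbit is non-trivial only when $\ell\ge 1$; the minimum over $\ell\ge 1$ is attained at $\ell=1$ and, using (2b) and $\dim_{\bC}\cY_{\{e\}}=n-1$, equals $2R(M)+n-3-(n-1)=2R(M)-2$. Taking the overall minimum gives the stated formula.

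\textbf{Main obstacle.} The most delicate piece is part (2d): while property \eqref{p:det} pins down the parity whenever the return map is non-degenerate, the fractional-period orbits are Morse-Bott degenerate, so one must pass to the lower semicontinuous extension while keeping the approximating paths inside $U(n-1)$. Verifying the Morse-Bott non-degeneracy precisely, namely ruling out any extra $1$-eigenvectors beyond the obvious tangent ones, also requires carefully using the condition $k\in G\setminus G^-$ together with the orbifold structure to avoid spurious contributions from nearby strata.
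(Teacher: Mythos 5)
Your proof is correct and follows essentially the same path as the paper's: identify the Reeb flow with the $S^1$-action to describe the Morse--Bott families, use \eqref{eqn:LCZ} for (2a), the loop/concatenation properties of $\mu_{\RS}$ for (2b)--(2c), complex-linearity together with Property \eqref{p:det} for (2d), and extract the infimum at $\ell=0$ for (3). The one small divergence is in (2d): the paper splits $\gamma^*\xi$ into the tangent-to-$M^G$ block (a loop, hence even $\mu_{\RS}$) and the normal block (nondegenerate complex-linear, so Property \eqref{p:det} applies directly), sidestepping the need to argue that the lower semicontinuous extension is realized by unitary perturbations -- a point you correctly flagged as the delicate step and which your approach leaves slightly implicit, though it does hold.
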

\begin{proof}
    The description of the Morse-Bott family follows from the fact that the Reeb flow is the $S^1$-action. The identity $\mu_{\LCZ}(G,k,\ell,i)=\mu_{\RS}(G,k,\ell,i)-\dim_{\bC}{\cY^i_G}$ follows from \eqref{eqn:LCZ}. Since the Robin-Salamon index for a loop of symplectic matrices is twice the Maslov index, we have $\mu_\RS(\{e\}, 0,\ell)=2\ell R(M)$. The linearized flow for $\mu_\RS(G,k,\ell,i)$ is the linearized flow for $\mu_\RS(G,k,0,i)$ concatenated with $\ell$-multiple of the loop of symplectic matrices from the $S^1$ action, and so $\mu_\RS(G,k,\ell,i) = \mu_\RS(G,k,0,i)+2\ell R(M)$ by the concatenation property. To see 2(d), the mod $2$ lower Conley-Zehnder index is the sum of the mod 2 indices from two components, as the matrix path decomposes into two components: one from $M^G$ and the other from the normal direction. In the direction of $M^G$, the mod $2$ lower Conley-Zehnder index is $\dim_{\bC} \cY^i_G$ as it is a loop of symplectic matrices in $\Sp(\dim_{\bR}\cY^i_G)$. In the normal direction, as the endpoint/linearized return map is complex-linear, we know that the parity of the complex dimension of the normal bundle, i.e.\ $\frac{1}{2}(\dim M-\dim M^G_i)=n-1-\dim_{\bC}\cY^i_G\mod 2$, is the same as the parity of the Conley-Zehnder index in the normal direction, by property \eqref{p:det}, as $\mathrm{sign} \det_{\bR} (\Id-\text{complex linear matrix})=1$ when it (the linearized return map in the normal direction) is non-singular. Then 2(d) follows from property \eqref{p:sum}. For a Reeb orbit of signature $(G, k, \ell, i)$, we have the identity 
    $$\lSFT(\gamma)=\mu_{\RS}(G,k,\ell,i)-\dim_{\bC}\cY^i_G+n-3,$$
    When $R(M)\ge 0$,  2(b) implies that $\inf_{\gamma}\{\lSFT(\gamma)\}$ is attained by Reeb orbits with $\ell = 0$ or $G=\{e\}$ and $\ell=1$, i.e.\ 
    $$\inf_{\gamma}(\lSFT(\gamma)) = \min_{G\ne \{e\},k,i} \{ \mu_{\RS}(G,k,0,i)-\dim_{\bC}\cY^i_G+n-3\}\cup \{2R(M)-2\}.$$
\end{proof}
In the following, we compute directly that $\inf_{\gamma}\lSFT(\gamma)$ is twice the minimal discrepancy in \eqref{eqn:md} for the conic contact form $\eta$. 
We will give two proofs of \eqref{eq-lSFT}. The one given below uses axiomatic properties of the Conley-Zehnder index. Another proof is contained in Appendix \ref{append}. 

\begin{prop}\label{prop:CZ_cone}
    Let $o\in \cC$ be an isolated Fano cone singularity of dimension $n$.  For any conic contact form $\eta$, we have
    \begin{equation}\label{eq-lSFT}
        \inf_{\gamma} \lSFT(\gamma) = 2\,\mathrm{md}(o,\cC).
    \end{equation}
\end{prop}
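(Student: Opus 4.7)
The proof plan is to establish the identity in the quasi-regular case using the Morse-Bott enumeration of Reeb orbits from Proposition~\ref{prop:Reeb_MB}, then match the resulting infimum with the algebraic minimal-discrepancy formula \eqref{eqn:md}.

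\textbf{Step 1 (Reduction to quasi-regular $\eta$).} The right-hand side $2\,\mathrm{md}(o,\cC)$ is an algebraic invariant of the singularity, independent of $\eta$. For an irregular Reeb vector field $\RV$, approximate by quasi-regular $\RV_i\to \RV$ as in Theorem~\ref{thm-compactify}(2) and transfer the identity to the limit using continuity of $\mu_{\RS}$ and lower semi-continuity of $\mu_{\LCZ}$.

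\textbf{Step 2 (Morse-Bott formula).} In the quasi-regular case, the Fano-cone assumption (Proposition~\ref{prop-klt}) together with Proposition~\ref{prop:Fano_Maslov} gives $R(M) = r(\cY,\CL) =: r \ge 1$, so Proposition~\ref{prop:Reeb_MB}(3) applies:
\[
    \inf_\gamma \lSFT(\gamma) = \min\Bigl\{\, 2R(M) - 2,\; \min_{(G,k,i)}\bigl( \mu_{\RS}(G,k,0,i) - \dim_\bC \cY^i_G + n - 3 \bigr) \,\Bigr\}.
\]
The first term $2R(M) - 2 = 2r - 2$ already matches the contribution of the ``$r$'' option in \eqref{eqn:md}.

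\textbf{Step 3 (Local weight identification).} By the Seifert $\bC^*$-bundle classification of \cite{Kol04}, strata $\cY^i_G \subset \cY$ with generic isotropy $G \cong \bZ_m$ correspond to quotient singularities $p \in \cC'$ with uniformizing chart $\bC^n/\frac{1}{m}(b_1,\ldots,b_n)$ satisfying $\pi_p^*\cY_0 = \{x_1=0\}$ and $\gcd(b_1,m)=1$. Each element $g = \epsilon^k \in G \setminus G^-$ (with $\epsilon = e^{2\pi i/m}$) produces a signature $(G, k, 0, i)$, and this sets up a bijection between Morse-Bott families and pairs $(p, g)$ appearing in the infimum on the right side of \eqref{eqn:md}.

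\textbf{Step 4 (Index computation).} On the uniformizing chart $\bC^n$, the Reeb flow only rotates the fiber direction $x_1$ via the $\bC^*$-action; closing up via the isotropy identification $g^{-1}$ produces an effective return map on $\xi$ whose eigenvalues on the base directions are $\{e^{2\pi i w_j(g)/m}\}_{j=2}^n$ with $w_j(g) = k b_j \bmod m$. Applying the normalization formula \eqref{eq-RSnorm} and translating from the uniformizing-chart trivialization of $\xi$ to the global $\det_{\bC}^{\otimes N}\xi$ trivialization via Lemma~\ref{lem:rational_change}, one obtains
\[
    \mu_{\RS}(G,k,0,i) - \dim_\bC \cY^i_G + n - 3 = \frac{2}{m}\Bigl( r\,w_1(g) + \sum_{j=2}^n w_j(g)\Bigr) - 2,
\]
where the factor $r = R(M)$ enters as the coefficient of $w_1(g)$ precisely because of the cone-direction twist accumulated under the trivialization change, a mechanism analogous to the proof of Proposition~\ref{prop:Fano_Maslov}.

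\textbf{Step 5 (Conclusion).} Combining Step~2 and Step~4 and invoking \eqref{eqn:md},
\[
    \inf_\gamma \lSFT(\gamma) = 2\min_{p,g}\Bigl\{\, r,\; \tfrac{1}{m}\bigl( r\,w_1(g) + \sum_{j=2}^n w_j(g) \bigr) \,\Bigr\} - 2 = 2\,\mathrm{md}(o,\cC).
\]

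\textbf{Main obstacle.} The technically delicate step is Step~4. Naively, the linearized Reeb flow on the uniformizing chart acts trivially on the base directions $x_2,\ldots,x_n$, so all nontrivial contributions to $\mu_{\RS}$ must come from (i) the isotropy-closure eigenvalues on the normal directions and (ii) the global-trivialization correction on the cone direction. Tracking the correction carefully, so that the Fano index $r$ emerges exactly as the coefficient of $w_1(g)$ and the formula matches \eqref{eqn:md} term-for-term, is the heart of the proof and reflects the interaction between the symplectic topology of the link $M$ and the algebraic geometry of the cone $\cC$.
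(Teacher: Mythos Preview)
Your outline matches the paper's approach in spirit: both reduce to the quasi-regular case, enumerate Morse-Bott orbits via Proposition~\ref{prop:Reeb_MB}, and compare a local index computation on the orbifold chart $\bC^n/\frac{1}{m}(w_1,\ldots,w_n)$ against the global trivialization to recover the formula in \eqref{eqn:md}. So the strategy is correct.

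Two remarks on where your plan diverges from the paper's execution. First, your Step~1 is unnecessary and its approximation argument is not obviously valid: the proposition sits inside the subsection on quasi-regular Sasaki manifolds, and Theorem~\ref{thm:CZ} only claims the identity for quasi-regular $\eta$. Passing a formula involving the discrete invariant $\mu_{\LCZ}$ through a limit of Reeb vector fields is delicate (lower semi-continuity alone does not give equality of infima), so you should simply drop this step.

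Second, in Step~4 you propose to correct the local trivialization to the global one via Lemma~\ref{lem:rational_change}, which would require explicitly computing $\langle c_1(\CL^{-1}),u_*[S]\rangle$ for a bounding surface of the orbit. The paper instead uses a slicker trick: it writes down an explicit local trivialization $\tau$ on the orbifold chart, computes $\lSFT^\tau(\gamma)$ and $\lSFT^{\tau^m}(\gamma^m)$ by hand, and then notes that $\gamma^m$ is a $w_1$-fold principal orbit whose global $\lSFT$ equals $2w_1 R(M)-2$ by Proposition~\ref{prop:Reeb_MB}(2)(b). The difference $\lSFT(\gamma^m)-\lSFT^{\tau^m}(\gamma^m)$ then gives the trivialization discrepancy divided by $m$, and one reads off $\lSFT(\gamma)$ without ever touching $c_1$ directly. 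This bypasses the bookkeeping you flag as the ``main obstacle''. The paper also provides a second, independent proof in Appendix~\ref{append} via parallel transport with respect to a Sasaki metric chosen so that $\beta=\frac{r}{n-1}$ makes the canonical section $\sigma$ parallel; you may find that route more congenial if you prefer an explicit frame computation.
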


\begin{proof}
    Locally, the holomorphic vector bundle is modeled on $\bC\times \bC^{n-1}$ with a cyclic action $\frac{1}{m}(w_1,\ldots,w_n)$, where the first factor is the fiber direction and $(m,w_1)=1$ as the link is smooth. The contact form on the associated $S^1$ bundle, i.e.\ $M$, is given by a connection form whose curvature is $\sqrt{-1}$ times the symplectic form on the base $\cY$. The choice of connections does not matter locally, as they differ by a gauge transformation. In the tangent space of $0$ in the base $\bC^{n-1}$, complex subspaces are symplectic, and the symplectic complement of an invariant symplectic subspace is also invariant, as the symplectic form is invariant. Hence there exists a symplectic basis such that the tangent space at $0$, as a symplectic vector space, is $(\bC^{n-1},\omega_{std})$ with a cyclic quotient action $\frac{1}{m}(w_2,\ldots,w_n)$. By Moser's trick, one can argue that the symplectic form on the base is locally modeled on $(\bC^{n-1},\omega_{std})$ with a cyclic quotient action $\frac{1}{m}(w_2,\ldots,w_n)$. As $\omega_{std}=\rd \lambda_{std} = \rd(\frac{1}{2}\sum_{i=2}^n (x_{i}\rd y_i-y_i\rd x_i))$ and $\lambda_{std}$ is invariant under the cyclic action, we can use $\rd \theta+\lambda_{std}$ as the connection on the trivialization. Therefore the contact form  is locally modeled on $(S^1\times \bC^{n-1}, \rd \theta+\lambda_{std})$ with a cyclic action action $\frac{1}{m}(w_1,\ldots,w_n)$. Note that we are only claiming the contact form, not the Sasaki structure, is standard, as Moser's trick only preserves the symplectic structure. With such a standard contact model, the Reeb flow is given by $\partial_{\theta}$. The projection $\pi:\xi=\ker(\rd \theta+\lambda_{std})\subset T(S^1\times \bC^{n-1})\to \bC^{n-1}$ is an isomorphism, and $\pi_*(\rd (\theta+\lambda_{std})) = \rd \lambda_{std}=\omega_{std}$ on $\bC^{n-1}$. In order to make the principal orbits have period $1$ as in \Cref{prop:Reeb_MB}, we rescale the contact form to $\frac{1}{2\pi}(\rd \theta + \lambda_{std})$, such that the Reeb flow is $\phi_t(\theta, v)=(\theta+2\pi t, v)$. Let $\gamma$ denote the Reeb orbit $(e^{2\pi w_1\sqrt{-1}t/m},0)$ for $t\in [0,1]$, we consider the symplectic trivialization $\gamma^*\xi$ by 
    $$S^1\times \bC^{n-1} \to \gamma^*\xi$$
    $$(t,v=(v_2,\ldots,v_n))\mapsto \pi^{-1}(e^{2\pi w_2\sqrt{-1}t/m} v_2,\ldots, e^{2\pi w_n\sqrt{-1}t/m} v_n)$$
    Under such a trivialization $\tau$, the linearized Reeb flow gives rise to the following path of symplectic matrices
    \begin{equation}\label{eqn:linear}
        t\to \mathrm{diag}(e^{-2\pi w_2\sqrt{-1}t/m},\ldots, e^{-2\pi w_n\sqrt{-1}t/m}), \quad t\in [0,1]
    \end{equation}
    With such a trivialization $\tau$, we have
    $$\lSFT^{\tau}(\gamma) = -\# \{ i|i\ge 2, w_i> 0 \} -\# \{ i|i\ge 2, w_i=0\}+n-3=-2,$$
    where $-\# \{ i|i\ge 2, w_i> 0 \}$ is from those axes with $w_i>0$ by Property \eqref{p:sign} of the Conley-Zehnder index, and $-\# \{ i|i\ge 2, w_i=0\}$ is $-\dim_{\bC}\cY_{\bZ/m}=-\frac{1}{2}\dim \ker (\phi(1)-\Id)$ in \eqref{eqn:LCZ}, which is the lower Conley-Zehnder index from those axes with $w_i=0$ by Properties \eqref{p:RS} and \eqref{p:RS2}. Then the claim follows from Property \eqref{p:sum}. Similarly, by Properties \eqref{p:RS} and \eqref{p:RS2}, we have  
    $$\lSFT^{\tau^m}(\gamma^m)= -2\sum_{i=2}^n w_i-2,$$
    where $\tau^m$ is the induced trivialization on $\gamma^m$, under which the linearized flow is \eqref{eqn:linear} for $t\in [0,m]$.
    Since $\gamma^m$ is $w_1$-th cover of a principal orbit, by \Cref{prop:Reeb_MB}, $\lSFT$ using a global trivialization is
    $$\lSFT(\gamma^m) = 2w_1 r(M)-2.$$
    Therefore, we can compute the discrepancy between the trivialization $\tau$ and the global trivialization and conclude that:
    $$\lSFT(\gamma)=\lSFT^{\tau}(\gamma)+\frac{\lSFT(\gamma^m)-\lSFT^{\tau^m}(\gamma^m)}{m}=\frac{2}{m}(\sum^n_{i=1} w_i+(r-1)w_1)-2.$$
    Therefore $\inf_{\gamma}\lSFT(\gamma)$ is twice the minimal discrepancy in \eqref{eqn:md}.
\end{proof}

\subsection{Symplectic homology and its variants}
Let $W$ be the Liouville filling of $M$ in $\bC^n$ in \Cref{prop:Liouville}. To every Liouville filling, one can associate several symplectic invariants, namely the symplectic homology (over $\bQ$) $SH_*(W;\bQ)$ as well as its variants, see e.g.\ \cite{CO}. Those invariants are always graded over $\bZ_2$, and they are generated by (non-degenerate) Reeb orbits/Hamiltonian orbits with grading from the Conley-Zehnder index. When $c_1(W)=0$, those invariants are graded by $\bZ$, where the grading depends on a choice of the trivialization of $\det_{\bC}TW$. If we only have $c^{\bQ}_1(W)=0$, those invariants can be graded over $\bQ$ with similar dependence on trivializations. 

Our proof follows from the computation of the positive $S^1$-equivariant symplectic homology $SH^{+,S^1}_*(W;\bQ)$ of $W$ from two perspectives: (1) from functorial properties of the invariants, (2) direct computation with the help of the $S^1$-symmetry on the Sasaki link and the Morse-Bott spectral sequence \cite{KvK16}.
\begin{rem}
    There are many different conventions regarding symplectic (co)homology and its variants. Since the references cited here are not completely consistent, we point out their relations: the symplectic homology $SH_*$ and its variants in \cite{BO13, CO,KvK16} is isomorphic to the symplectic cohomology $SH^*$ and its variants in \cite{Ri13,Se08} by $SH^*=SH_{n-*}$, where $n$ is half of the dimension of the symplectic domain. The symplectic homology in \cite{BO13, CO,KvK16} is graded by Conley-Zehnder indices of the Hamiltonian orbits.
\end{rem}

\begin{prop}\label{prop:functorial}
    Let $W$ be the strongly pseudo-convex domain bounded by $M$ in $\bC^n$ obtained by \Cref{prop-compactify}, the positive $S^1$-equivariant symplectic homology of $W$ is
    $$SH^{+,S^1}_*(W;\bQ)=\left\{\begin{array}{cc}
        \bQ, &  *=n+1+2m, m\in \bZ_{\ge 0}\\
         0, &  \text{else}
    \end{array}\right..$$
\end{prop}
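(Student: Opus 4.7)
The strategy is to use the Liouville cobordism from \Cref{prop:cobordism} together with the deformation invariance of $SH^{+, S^1}_*$ and the standard computation for the ball in $\bC^n$. First, I would note that by \Cref{prop:cobordism}, $W$ is a Liouville subdomain of $\bC^n$ equipped with the plurisubharmonic Liouville potential $F_1 = \fkr^2 + f(\rho)$, and that the complement $\bC^n \setminus W$ is a Liouville cobordism to the standard contact sphere. The family $F_t = t\fkr^2 + f(\rho)$, $t \in [0,1]$, connecting $F_1$ to $F_0 = f(\rho)$ provides a Liouville homotopy on $\bC^n$ with $C$ a regular value throughout, so by deformation invariance of $SH^{+, S^1}_*$,
\[
SH^{+, S^1}_*(\bC^n, F_1;\bQ) \cong SH^{+, S^1}_*(\bC^n, F_0;\bQ) \cong SH^{+, S^1}_*(\bB^{2n};\bQ).
\]

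The main functorial step is to argue $SH^{+, S^1}_*(W;\bQ) \cong SH^{+, S^1}_*(\bC^n, F_1;\bQ)$. For this, I would use the Viterbo transfer map associated to the Liouville embedding $W \hookrightarrow (\bC^n, F_1)$, combined with the vanishing $SH^{S^1}_*(\bC^n;\bQ) = 0$ of the full $S^1$-equivariant symplectic homology of $\bC^n$. Concretely, using a Hamiltonian on $(\bC^n, F_1)$ adapted to the subdomain $W$, one decomposes Floer trajectories into pieces inside $W$ and pieces in the cobordism; the vanishing of the ambient $SH^{S^1}$ together with the Gysin-type long exact sequence relating $SH^{S^1}$, $SH^{+, S^1}$ and the equivariant cohomology of the filling then forces the action-filtered continuation map to be a quasi-isomorphism between the positive equivariant complexes of $W$ and of $(\bC^n, F_1)$.

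It remains to compute $SH^{+, S^1}_*(\bB^{2n};\bQ)$ by the standard Morse--Bott calculation. The Reeb flow on the round $S^{2n-1}$ is the Hopf flow, and for each multiplicity $k \ge 1$ the closed Reeb orbits form a single $S^1$-family parameterized by $\bC P^{n-1}$. A direct application of the index formula in \Cref{prop:Reeb_MB} (with trivial stratification and $R(\bB^{2n}) = n$) shows that all contributing Robbin--Salamon indices are even and that the generators land in degrees $n+1, n+3, n+5, \ldots$. The Morse--Bott spectral sequence of \cite{KvK16} therefore degenerates at the $E_2$-page for parity reasons, and one recovers a single $\bQ$-summand in each degree $n+1+2m$, $m \ge 0$.

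The main obstacle is the second step. The contact structures on $M$ and $S^{2n-1}$ have genuinely different Reeb dynamics --- the families of \Cref{prop:Reeb_MB} include the non-standard strata $\cY_G$ --- so the symplectic completions $\hat W$ and $(\bC^n, F_1)$ are not a priori Liouville isomorphic, and the Viterbo map need not be an isomorphism for a general Liouville subdomain. The crucial input, unavailable in general, is that the Liouville cobordism sits inside $\bC^n$ where the ambient $SH^{S^1}$ vanishes; controlling the action-filtered continuation map carefully enough to exploit this vanishing, and checking compatibility with the $S^1$-equivariant Borel construction, is the technically delicate part of the argument.
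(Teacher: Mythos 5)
Your proposal takes a genuinely different route from the paper, and the pivotal step has a real gap. You propose to transport the computation to the ball via deformation invariance of $SH^{+,S^1}_*$, and to bridge the gap between $W$ and the ambient domain $(\bC^n, F_1)$ by arguing that the Viterbo transfer is a quasi-isomorphism on the positive $S^1$-equivariant complexes. But the Viterbo transfer is a \emph{unital ring} map only at the level of full symplectic homology $SH_*$; there is no analogous ``vanishing of the source kills the target'' principle directly available for $SH^{S^1}_*$ or $SH^{+,S^1}_*$. Even granting $SH^{S^1}_*(\bC^n;\bQ)=0$, you would still need $SH^{S^1}_*(W;\bQ)=0$ and the commutativity of the relevant long exact sequences under transfer, then a five-lemma argument to conclude the transfer on $SH^{+,S^1}_*$ is an isomorphism. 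You acknowledge this as ``the technically delicate part'' but do not carry it out, so the proposal as written leaves the key step unproven. In addition, your reduction to $\bB^{2n}$ is superfluous: no comparison theorem between $W$ and the ball is needed to determine $SH^{+,S^1}_*(W)$.

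The paper circumvents exactly this difficulty. It glues $W$ to the cobordism of \Cref{prop:cobordism} to produce a Liouville filling $\tilde{W}$ of the standard contact sphere, invokes Seidel--Smith to get $SH_*(\tilde{W};\bQ)=0$, and uses the unital ring transfer map $SH_*(\tilde{W})\to SH_*(W)$ (which \emph{is} a general tool at the full-$SH$ level) to conclude $SH_*(W;\bQ)=0$. All the remaining structure is then extracted purely internally to $W$: the tautological exact sequence $H^*(W)\to SH_{n-*}(W)\to SH^+_{n-*}(W)\to H^{*+1}(W)$ together with $W$ being a homology ball pins down $SH^+_*(W)$; the Bourgeois--Oancea Gysin sequence and the nilpotency of the degree-shift map $D$ then force $SH^{+,S^1}_*(W)$ to be the stated answer. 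At no point does the paper need to know that the transfer is an isomorphism for $SH^+$ or $SH^{+,S^1}$, nor does it need the Morse--Bott computation for $\bB^{2n}$. If you want to rescue your approach, replace the appeal to a quasi-isomorphism between the positive equivariant complexes by the paper's two-step argument: vanish the full $SH_*$ of $W$ first (via the glued filling and Seidel--Smith), then derive the positive and equivariant parts from $W$'s own long exact sequences.
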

\begin{proof}
    By gluing the Liouville filling $W$ from \Cref{prop:Liouville} and the Liouville cobordism in \Cref{prop:cobordism}, we get a Liouville filling $\tilde{W}$ of the standard contact sphere. Then by a theorem of Seidel and Smith \cite[Corollary 6.5]{Se08}, the symplectic homology of $\tilde{W}$ vanishes. Because there is a unital ring map $SH_*(\tilde{W};\bQ)\to SH_*(W;\bQ)$ from the Viterbo transfer \cite[Theorem 9.5]{Ri13}, we have the vanishing $SH_*(W;\bQ)=0$. Then by the tautological long exact sequence \cite[Lemma 8.1]{Ri13}
    $$\ldots \to H^*(W;\bQ)\to SH_{n-*}(W;\bQ)\to SH^+_{n-*}(W;\bQ)\to H^{*+1}(W;\bQ)\to \ldots, $$
    we get that $SH^+_*(W;\bQ)$ is supported in degree $n+1$ with rank $1$ as $W$ is a homology ball by \Cref{prop-compactify}. By \cite[Theorem 1.1]{BO13}, we have the following Gysin exact sequence, 
    $$\ldots \to SH^+_k(W;\bQ) \to SH^{+,S^1}_k(W;\bQ)\stackrel{D}{\to} SH^{+,S^1}_{k-2}(W)\to SH^+_{k-1}(W)\to \ldots.$$
    As a consequence, we know that $D: SH^{+,S^1}_k(W;\bQ)\to SH^{+,S^1}_{k-2}(W)$ is an isomorphism when $k\ge n+3$ or $k\le n$. The map $D$ is nilpotent in the sense that for every $x\in SH^{+,S^1}_*(W;\bQ)$ there exists a $K\in \bZ{\ge 0}$ depending on $x$, such that $D^kx=0$. This follows from the definition of $D$ \cite{BO13} as it decreases the action.  Therefore we have $SH^{+,S^1}_k(W;\bQ)=0$ when $k\le n$. Then from the Gysin exact sequence, we have exact sequences:
       $$\ldots \to 0 \to SH^+_{n+1}(W;\bQ) \to SH^{+,S^1}_{n+1}(W;\bQ)\stackrel{D}{\to} 0 \to \ldots,$$
    and
     $$\ldots \to 0 \to SH^{+,S^1}_{n+2}(W;\bQ)\stackrel{D}{\to} 0 \to \ldots.$$
    So we get $SH^{+,S^1}_{n+1}(W;\bQ)=\bQ$ and  $SH^{+,S^1}_{n+2}(W;\bQ)=0$. The claim now follows because we know that $D$ is an isomorphism when $k\ge n+3$.
\end{proof}

The next result works for more general $M$, namely for those \Cref{prop:Reeb_MB} applies. Let $W$ be a Liouville symplectic filling of $M$ such that the rational first Chern class $c^{\bQ}_1(W)=0$. We have a Morse-Bott spectral sequence \cite[Theorem 5.4. (5.3)]{KvK16} computing $SH^{+,S^1}_*(W;\bQ)$. Its first page is given by:
$$E_{p,q}^1=\bigoplus_{p=N(\ell + \frac{k}{|G|})} H_{p+q-\mu_{\LCZ}(G,k,\ell,i)}(\cY^i_G;\bQ)$$
where $N=\mathrm{lcm}_{G\in S}(|G|)$. Here, the spectral sequence is from the filtration by the period recorded by $p$ and $p+q$ is the (rational) grading on $SH^{+,S^1}_*(W;\bQ)$ by the (rational) Conley-Zehnder indices. This spectral sequence is induced by the \emph{increasing} filtration by subcomplexes generated by Reeb orbits with period up to $p/N$. This is a \emph{homological} spectral sequence with differential $d^r_{p,q}:E^r_{p,q}\to E^r_{p-r,q+r-1}$. Moreover, each element from $H_{*}(\cY^i_G;\bQ)$ also has a well-defined $\bZ_2$ grading corresponding to the \emph{same} spectral sequence from the period filtration but graded by the mod $2$ Conley-Zehnder index. The differential $d^r$ is degree $-1$ for both the $\bQ$-grading and $\bZ_2$-grading. In the following picture, we spell out the $\bQ$-grading and use different colors to mark the $\bZ_2$-grading of each element.
\begin{rem}
    \cite[Theorem 5.4, condition (ii)]{KvK16} requires that $c_1(\xi),\pi_1(M)=0$. Such a condition is only needed to obtain a canonical $\bZ$-grading on symplectic homology. Moreover, the actual condition needed to obtain a canonical $\bZ$-grading is that all Reeb orbits are null-homologous and $c_1(\xi)=0$. While the spectral sequence is induced from a period/action filtration, which always exists in a Morse-Bott situation by the proof of \cite[Theorem 5.4]{KvK16}. The extra condition equips the spectral sequence with $\bZ$-grading.  In our case here, we have $c_1^{\bQ}(\xi)=0$ and all Reeb orbits have torsion homology class, this equips symplectic homology and the spectral sequence with a canonical $\bQ$-grading using the rational Conley-Zehnder index recalled in \S \ref{ss:CZ}.
\end{rem}

    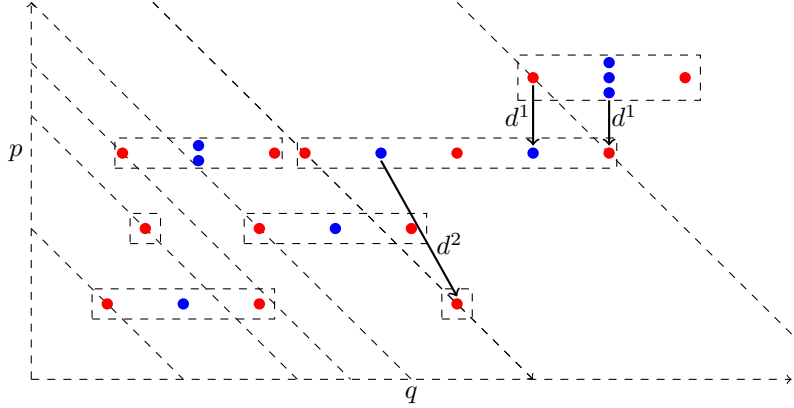
\begin{figure}[H]
       \begin{center}
       \begin{tikzpicture}
        \draw[dashed] (0,2) to (2,0);
        \draw[dashed] (0,3.5) to (3.5,0);
        \draw[dashed] (0,5) to (5,0);
        \draw[dashed] (0,4.2) to (4.2,0);
        \draw[dashed] (1.6,5) to (6.6,0);
        \draw[dashed] (10,0.6) to (5.6,5);
        \draw[dashed,->] (0,0) to (10,0);
        \draw[dashed,->] (0,0) to (0,5);
        \draw[->,thick] (7.6,3.7) to (7.6,3.1);
        \draw[->,thick] (6.6,3.9) to (6.6,3.1);
        \draw[->,thick] (4.6,2.9) to (5.6,1.1);
        \draw[->,dashed] (1.6,5) to (6.6,0);
        \node at (7.8,3.5) {$d^1$};
        \node at (6.4,3.5) {$d^1$};
        \node at (5.5,1.8) {$d^2$};

        \node at (1,1) [circle,fill,inner sep=1.5pt,color=red] {};
        \node at (2,1) [circle,fill,inner sep=1.5pt,color=blue] {};
        \node at (3,1) [circle,fill,inner sep=1.5pt, color=red] {};
        \draw[dashed] (0.8,0.8) to (3.2,0.8) to (3.2,1.2) to (0.8,1.2) to (0.8,0.8);

        \node at (5.6,1) [circle,fill,inner sep=1.5pt,color=red] {};
        
        \draw[dashed] (5.4,0.8) to (5.8,0.8) to (5.8,1.2) to (5.4,1.2) to (5.4,0.8); 
        
        \node at (1.5,2) [circle,fill,inner sep=1.5pt,color=red] {};
        \draw[dashed] (1.3,1.8) to (1.7,1.8) to (1.7,2.2) to (1.3,2.2) to (1.3,1.8);
        \node at (3,2) [circle,fill,inner sep=1.5pt,color=red] {};
        \node at (4,2) [circle,fill,inner sep=1.5pt,color=blue] {};
        \node at (5,2) [circle,fill,inner sep=1.5pt,color=red] {};
        \draw[dashed] (2.8,1.8) to (5.2,1.8) to (5.2,2.2) to (2.8,2.2) to (2.8,1.8);
        
        \node at (1.2,3) [circle,fill,inner sep=1.5pt,color=red] {};
        \node at (2.2,3.1) [circle,fill,inner sep=1.5pt,color=blue] {};
        \node at (2.2,2.9) [circle,fill,inner sep=1.5pt,color=blue] {};
        \node at (3.2,3) [circle,fill,inner sep=1.5pt,color=red] {};
        \draw[dashed] (1.1,2.8) to (3.3,2.8) to (3.3,3.2) to (1.1,3.2) to (1.1,2.8);

        \node at (3.6,3) [circle,fill,inner sep=1.5pt,color=red] {};
        \node at (4.6,3) [circle,fill,inner sep=1.5pt, color=blue] {};
        \node at (5.6,3) [circle,fill,inner sep=1.5pt,color=red] {};
        \node at (6.6,3) [circle,fill,inner sep=1.5pt,color=blue] {};
        \node at (7.6,3) [circle,fill,inner sep=1.5pt,color=red] {};
        \draw[dashed] (3.5,2.8) to (7.7,2.8) to (7.7,3.2) to (3.5,3.2) to (3.5,2.8);

        \node at (6.6,4) [circle,fill,inner sep=1.5pt,color=red] {};
        \node at (7.6,4.2) [circle,fill,inner sep=1.5pt,color=blue] {};
        \node at (7.6,4) [circle,fill,inner sep=1.5pt,color=blue] {};
        \node at (7.6,3.8) [circle,fill,inner sep=1.5pt,color=blue] {};
        \node at (8.6,4) [circle,fill,inner sep=1.5pt,color=red] {};
        \draw[dashed] (6.4,3.7) to (8.8,3.7) to (8.8,4.3) to (6.4,4.3) to (6.4,3.7);

        \node at (-0.2,3) {$p$};
        \node at (5,-0.2) {$q$};

		\end{tikzpicture}
        \caption{A schematic picture of the first page of the spectral sequence}
    \end{center}
    \end{figure}
    To present the first page of the spectral sequence, the vertical axis is $p$ and the horizontal axis is $q$; each dot represents a copy of $\bQ$ with color indicating their $\bZ_2$ grading. Each dashed block represents some $H_*(\cY^i_G;\bQ)$ with the $p+q$ coordinate of the leading term is $\mu_{\LCZ}(G,k,\ell,i)$. All the leading dots have the same $\bZ_2$ grading by \eqref{p:d} of \Cref{prop:Reeb_MB}, where red means $n-1 \mod 2$.
    The arrows are differentials that respect both gradings.

\begin{prop}\label{prop:hmi}
     Under the assumption of \Cref{prop:Reeb_MB} and $R(M)>0$. If there is a Liouville filling $W$ of $M$ such that $c_1^{\bQ}(W)=0$, 
     we have 
     $$\inf_{\gamma}(\lSFT(\gamma))=\inf\{k| SH^{+,S^1}_k(W;\bQ)\ne 0\}+n-3 =\hmi(M),$$ 
     where $\gamma$ ranges over Reeb orbits of a conic contact form $\eta$.
\end{prop}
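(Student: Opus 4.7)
The plan is to compute $\inf\{k\mid SH^{+,S^1}_k(W;\bQ)\neq 0\}$ via the Morse-Bott spectral sequence of \cite{KvK16}, match it with $\inf_\gamma\lSFT(\gamma)$ using \Cref{prop:Reeb_MB}, and then deduce the equality with $\hmi(M)$ from a general functorial inequality. Set $m_0:=\inf_\gamma\mu_{\LCZ}(\gamma)$, so $\inf_\gamma\lSFT(\gamma)=m_0+n-3$ by (2)(a) of \Cref{prop:Reeb_MB}; by (2)(c) and the hypothesis $R(M)>0$, the Robbin-Salamon index grows linearly in $\ell$, so $m_0$ is attained on finitely many families with $\ell=0$ and in particular with bounded period. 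Since $E^1_{p,q}=\bigoplus H_{p+q-\mu_{\LCZ}}(\cY^i_G;\bQ)$ vanishes whenever $p+q<m_0$ (each summand has negative homological degree), the subquotient $E^\infty$ gives $SH^{+,S^1}_k(W;\bQ)=0$ for $k<m_0$, which is one direction of the desired equality.

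The core of the proof is to show that a class at $p+q=m_0$ survives to $E^\infty$. Let $T_{\max}$ denote the maximum period among all Morse-Bott families with $\mu_{\LCZ}=m_0$ (well-defined by the finiteness above), fix such a family $\mathcal{F}_*$ at column $p_*=NT_{\max}$, and consider its leading term in $H_0$ sitting at $(p_*,q_*)$ with $p_*+q_*=m_0$; by (2)(d) its $\bZ_2$-grading is $n-1\pmod 2$. Each differential $d^r$ shifts both the $\bQ$-degree and the $\bZ_2$-grading by $-1$. Outgoing differentials land at total degree $m_0-1$ where $E^r=0$. An incoming differential would come from a source at $(p_*+r,q_*-r+1)$ with column $p_*+r>p_*$ and total degree $m_0+1$, so its family $\mathcal{F}_s$ satisfies $\mu_{\LCZ}(\mathcal{F}_s)\in\{m_0,m_0+1\}$. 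In the first case, the period of $\mathcal{F}_s$ exceeds $T_{\max}$, contradicting maximality. In the second case, the source is a leading term (red, $\bZ_2$-grading $n-1\pmod 2$), but $d^r$ flips parity whereas the target is also red, a contradiction. Thus every differential at $(p_*,q_*)$ vanishes, the class survives to $E^\infty$, and $SH^{+,S^1}_{m_0}(W;\bQ)\neq 0$, giving the matching inequality.

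Finally, to upgrade the equality to one with $\hmi(M)$, for any non-degenerate contact form $\eta'$ on $M$ the Floer chain complex computing $SH^{+,S^1}(W;\bQ)$ is generated by good Reeb orbits of $\eta'$ with grading $\lSFT_{\eta'}$, so the unconditional inequality
\[
\inf\{k\mid SH^{+,S^1}_k(W;\bQ)\neq 0\}+n-3\le\inf_{\gamma'}\lSFT_{\eta'}(\gamma')
\]
holds for every $\eta'$. Combined with the equality already established for the Morse-Bott conic $\eta$ and the definition $\hmi(M)=\sup_{\eta'}\inf_{\gamma'}\lSFT_{\eta'}(\gamma')$, the conic $\eta$ realizes this supremum, yielding the claimed three-way identity. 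The main obstacle is the survival step: what makes it work is the combination of the maximality of $T_{\max}$ (which uses $R(M)>0$ to ensure the set of minimum-$\mu_{\LCZ}$ families is finite) with the $\bZ_2$-parity obstruction coming from (2)(d), the two together preventing any nonzero $d^r$ from reaching the leading term of $\mathcal{F}_*$.
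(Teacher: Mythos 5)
The proposal is correct and follows essentially the same strategy as the paper: the upper bound $\hmi(M)\le\inf\{k\mid SH^{+,S^1}_k\neq 0\}+n-3$ from the chain-level support, and survival of the leading $H_0$-class at minimal $p+q$ with maximal $p$ via the $\bZ_2$-parity obstruction combined with period maximality. Your two-case split on $\mu_{\LCZ}(\mathcal{F}_s)\in\{m_0,m_0+1\}$ is just an unpacking of the paper's observation that the source $\beta$ must be blue (hence not a leading term, forcing $\mu_{\LCZ}=m_0$) followed by the same maximality contradiction.
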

\begin{proof}
    We know that $SH^{+,S^1}_*(W;\bQ)$ is an invariant independent of $\eta$ whose underlying cochain complex is supported in degrees at least $\inf_{\gamma}(\lSFT(\gamma))+3-n$ for any contact form $\eta$. Therefore for any contact form $\eta$, we have 
    $$\inf_{\gamma}(\lSFT(\gamma))\le \inf\{k| SH^{+,S^1}_k(W;\bQ)\ne 0\}+n-3$$
    where we enumerate $\gamma$ through all Reeb orbits of $\eta$. Hence, we know that 
    $$\hmi(Y) \le \inf\{k| SH^{+,S^1}_k(W;\bQ)\ne 0\}+n-3.$$ 
    Now we return to the case of conic contact forms. As $R(M)>0$, $\inf_{\gamma}(\lSFT(\gamma)) = \min_{G,k} \{ \mu_\RS(G,k,0,i)-\dim_{\bC}\cY^i_G+n-3\}\cup \{2R(M)-2\}$. We claim that an element in $H_0(\cY^i_G;\bQ)$ with the minimal rational degree $p+q$ and the maximal $p$ will survive in the spectral sequence. The $R(M)>0$ condition implies that concatenation with principal orbits increases both the period and grading, hence such elements exist. For such an element $\alpha$ to be killed in the spectral sequence, we must have an element $\beta$ in $E^1_{p+r,q-r+1}$ that survives till the $r$th page and $d^r\beta=\alpha$. 
    
      \begin{figure}[H]
       \begin{center}
       \begin{tikzpicture}
        \draw[dashed] (-2.2,2.2) to (1,-1);
        \node at (0,0) [circle,fill,inner sep=1.5pt,color=red] {};
        \node at (1,0) [circle,fill,inner sep=1.5pt,color=blue] {};
        \node at (2,0) [circle,fill,inner sep=1.5pt, color=red] {};
        \draw[dashed] (-.2,-.2) to (2.2,-.2) to (2.2,.2) to (-.2,.2) to (-.2,-.2);

        \node at (-1,2) [circle,fill,inner sep=1.5pt,color=blue] {};
        \node at (-4,2) [circle,fill,inner sep=1.5pt,color=red] {};
        \node at (-1.5,2)  {$\ldots$};
        \node at (-.5,2)  {$\ldots$};
        \draw[->,thick] (-1,1.9) to (0,0.1);
        \draw[dashed] (-4.2,1.8) to (2.2,1.8) to (2.2,2.2) to (-4.2,2.2) to (-4.2,1.8);
        \node at (0.2,0) {$\alpha$};
        \node at (-0.8,2) {$\beta$};
        \node at (-3.8,2) {$\gamma$};
       \end{tikzpicture}
       \end{center}
       \end{figure}
    \noindent
    Since the $\alpha$ and $\beta$ have different $\bZ_2$ grading, we must have $\alpha$ red and $\beta$ blue, i.e.\ in the situation shown in the picture above. As a consequence, the minimal degree term $\gamma$ of the block containing $\beta$ must have strictly larger $p$ and no larger $p+q$ compared to $\alpha$. But this contradicts with the choice of $\alpha$. Therefore $\alpha$ is non-trivial in $SH_*^{+,S^1}(W;\bQ)$ with the minimal grading $\inf_{\gamma}(\lSFT(\gamma))+3-n$. On the other hand, we have $\hmi(M)\ge \inf_{\gamma}(\lSFT(\gamma))$ by definition. So we must achieve the following equalities:
    $$\inf_{\gamma}(\lSFT(\gamma))=\inf\{k| SH_k^{+,S^1}(W;\bQ)\ne 0\}+n-3=\hmi(M). $$
\end{proof}
\begin{rem}\label{rmk:aug}
     Assume the Reeb flow of $\eta$ gives an $S^1$-action as above and $R(M)>0$. It is always true that $\hmi(M)$ is the $\inf_{\gamma}(\lSFT(\gamma))$, which is observed by a homology class, i.e.\ we do not need to assume the existence of a Liouville filling $W$. The existence of a Liouville filling allows us to use $\bQ$-graded positive $S^1$-equivariant symplectic homology, which is the linearized contact homology w.r.t.\ the augmentation from $W$ \cite{BO09}. Such a Liouville filling always exists in our setting if $o\in \cC$ is smoothable and $\dim_{\bC} \cC\ge 3$, or in the compactification problem by \Cref{prop:Liouville}. In general, we can use the linearized contact homology w.r.t.\ an algebraic $\bQ$-graded augmentation over $\bC$. The Morse-Bott spectral sequence works for such linearized contact homology as well, and the proof of \Cref{prop:hmi} holds. As contact manifolds with $S^1$-Reeb flow are strongly fillable by \cite{KL21}, the contact homology does not vanish \cite[Theorem 1 and Proposition 2.9]{LW11}. Since the vanishing of contact homology is the only obstruction to the existence of an algebraic augmentation over an algebraically closed field, we can always use the linearized contact homology, which is expected to be a contact invariant\footnote{The foundation of linearized contact homology is an ongoing project \cite{DG} of Russell Avdek, Erkao Bao, Georgios Dimitroglou Rizell, and the second author.}.
\end{rem}

\subsection{Proofs of the main theorems}

\begin{proof}[Proof of Theorem \ref{thm:CZ}]
The statement follows immediately by combining \eqref{eqn:md}, Propositions \ref{prop:CZ_cone} and Proposition \ref{prop:hmi}.
\end{proof}

\begin{proof}[Proof of Theorem \ref{thm-AC}]
By approximation, we can assume that there exists a sequence of quasi-regular $\RV_k$ that converges to $\RV$ as $k\rightarrow +\infty$. For each $\RV_k$, we apply Theorem \ref{thm-compactify} to get K\"{a}hler orbifold compactification of $\bC^n$. By Proposition \ref{prop-klt} we know that $\cC$ has a Gorenstein isolated singularity at the vertex $o$.  

By Propositions \ref{prop-compactify} and Proposition \ref{prop:Fano_Maslov}, we have $R(M)>0$. Then Propositions \ref{prop:hmi} and Proposition \ref{prop:functorial} imply that $\inf_{\gamma}(\lSFT(\gamma))=\inf\{k| SH^{+,S^1}_k(W;\bQ)\ne 0\}+n-3=2n-2$, which implies the minimal discrepancy number is $n-1$ by \Cref{thm:CZ}.

If we assume that Shokurov's conjecture is true, in particular if the dimension $n\le 3$, then $\cC$ is a smooth affine variety with an effective torus action with a unique fixed attractive point $o\in \cC$. By \cite{KR82}, we know that $\cC$ is equivariant biholomorphic to $\bC^n$ with a linear torus action. So the Reeb vector field of $(\cC, g_0)\cong (\bC^n, g_0)$ is exactly given by the imaginary part of some linear vector field $\sum_{i=1}^n w_i z_i \partial_{z_i}$ with $w_i>0, i=1,\dots, n$. Now if $g$ is Ricci flat, then $(\cC, g_0)$ is also Ricci flat. By the volume minimization property of Reeb vector fields associated to Ricci flat K\"{a}hler cone metrics (see \cite{MSY08}), the vector $(w_1, \dots, w_n)$ must minimize the normalized functional $\frac{(w_1+\cdots+w_n)^n}{w_1\cdot\cdots\cdot w_n}$. This forces $w_1=w_2=\cdots=w_n$ and $g_0$ is the flat metric on $\bC^n$. In other words, the metric tangent cone of $(\bC^n, g)$ at infinity is the flat metric on $\bC^n$. By the rigidity result from \cite{And90}, we know that the Ricci flat metric $g$ on $\bC^n$ itself must be flat. 
\end{proof}

\begin{proof}[Proof of \Cref{thm-main1}]
    By \cite[Theorem 1.1.III]{BM78} or Proposition \ref{prop-compactify}, we know that $X$ and $Y$ are Fano manifolds satisfying $-K_X=(r+1)[Y]$ for $r>0$. According to the construction of Tian-Yau (see Example \ref{exmp-TY}), there exists a complete AC metric on $X\setminus Y$ whose asymptotical cone is $\cC(Y, L)$ with $L=[Y]|_Y$.  So we can apply Theorem \ref{thm-AC} to the case when $(\cY, \CL)=(Y, L)$ has no orbifold singularities. Since $-K_Y=r L$, by \eqref{eqn:md} it is easy to see that $\mathrm{md}(o, \cC(Y, L))+1$ is now equal to $r$. So by Theorem \ref{thm-AC}, $r$ is equal to $n$. By Kobayashi-Ochiai's criterion (\cite{KO73}), we know that $Y$ is holomorphic to $\bP^{n-1}$ and the conclusion now follows from the results in \cite[Theorem 1.1.(II)]{BM78}.
\end{proof}

\appendix
\section{An alternative proof of Proposition \ref{prop:CZ_cone} }\label{append}

We will use the same notation as our proof of the formula \eqref{eqn:md}. 
Let $\cC=\cC(\cY, \CL)$ be an isolated Fano cone singularity. Let $\cC'$ denote the total space of the orbifold line bundle $\pi: \CL^{-1}\rightarrow \cY$ and $\mu_1: \cC'\rightarrow \cC$ be the contraction of the zero section. Fix any point $p\in \cY$, by \cite[25]{Kol04} there exist a neighborhood $U$ of $p\in \cY$ and $m\in \bZ_{>0}$ and $e_2, \dots, e_n\in \{0, 1,\dots, m-1\}$ such that
\begin{equation*}
\pi^{-1}(U)=\bC\times \bC^{n-1}/\frac{1}{m}(1, e_2,\dots, e_{n}). 
\end{equation*} 
Let $\{x_1, x_2,\dots, x_{n}\}$ denote the coordinates on the ramified cover $\bC^n$. 
We have the isomorphism
\begin{equation*}
\bC^*\times \bC^{n-1}/\frac{1}{m}(1, e_2, \dots, e_n)=\mathrm{Spec}(\bC[x_1^{m}, x_2 x_1^{-e_2}, \dots, x_n x_1^{-e_n}]).
\end{equation*}
Assume that $-K_{\cY}=r \CL$ so that $\cC$ is $\bQ$-Gorenstein, i.e.\ there exists $N\in \bZ_{>0}$ such that $N \cdot K_{\cC}$ is Cartier. Then we can write down a local trivializing section of $(K_{\cC})^{\otimes N}$ as $${\sigma}_{\cC}=\left(x_1^{r-1} dx_1\wedge dx_2\wedge \cdots \wedge dx_n\right)^{\otimes N}$$ which induces a trivializing section of $((\det \xi)^\vee)^{\otimes N}$ as ${\sigma}:=(\iota_{x_1\partial_{x_1}}\sigma_{\cC})^{\otimes N}= (x_1^r dx_2\wedge \cdots \wedge dx_n)^{\otimes N}$. 
Choose an orbifold Hermitian metric $h$ on the orbifold line bundle $\CL^{-1}\rightarrow \cY$ such that its Chern curvature $\sqrt{-1}\partial\bar{\partial}\log h$ is an orbifold K\"{a}hler form. Locally over the uniformization chart $h$ is represented as $a(x')|x_1|^2$ where $x'=\{x_2,\dots, x_n\}$. For any $\beta>0$, $h^\beta$ induces a radius function $\mathfrak{r}$ on $\cC$ which in turn defines a Riemannian cone metric $g_\cC$ on $\cC$. Over the regular part $\cC^\circ$, we can find a basis of the contact distribution $\xi$ that are compatible with the trivialization of $(\det \xi)^{\otimes N}$:
\begin{equation*}
V_j:=x_1^{e_j}(\partial_{x_j}-a^{-1} (\partial_{x_j}a) x_1\partial_{x_1}), \quad 2\le j\le n. 
\end{equation*}
Indeed, it is easy to verify that $\{V_j: 2\le j\le n\}$ are a basis for the orthogonal complement to the Euler vector field $x_1\partial_{x_1}$ and are invariant under the cyclic group action. 
Let $\nabla$ denote the Levi-Civita connection of $g_\cC|_{\{\mathfrak{r}=1\}}$. Set $x_1=|x_1| e^{\sqrt{-1}\theta}$. By a straightforward calculation, we get:
\begin{equation*}
\nabla_{\partial_\theta}V_j= -(\beta+e_j)V_j, \quad \nabla_{\partial_\theta} \sigma=N(r-\beta (n-1))\sigma. 
\end{equation*}
So if we choose $\beta=\frac{r}{n-1}$ then the parallel transport trivializes $(\det \xi)^{\otimes N}$. From now on, we set $\beta$ to be of this value. 
Let $\gamma$ be the parametrized closed Reeb orbit: $\theta\rightarrow [(\theta, 0,\dots, 0)]$ with $\theta\in [0, \frac{2\pi}{m}]$. We now write down a trivialization of $\gamma^*\xi^{\oplus N}$ by an appropriate choice of basis that is compatible with the trivialization of $\det(\xi)^{\otimes N}$.
First, we consider the Gorenstein case, i.e.\ when $N=1$. We set
\begin{equation*}
V'_2=e^{\sqrt{-1}(\beta (n-1)+\sum_{j=2}^n e_j)\theta}V_2,  \qquad V'_j=V_j, \quad 3\le j\le n.
\end{equation*}
Note that we have the Lie derivatives:
\begin{equation*}
\mathscr{L}_{\partial_\theta}V'_2=\sqrt{-1}\left((n-1)\beta+\sum_{j=2}^n e_j-e_2\right)V'_2, \quad \mathscr{L}_{\partial_\theta}V'_j=-\sqrt{-1} e_j V'_j. 
\end{equation*}
So the linearized flow $\{\phi_\theta: 0\le \theta\le 2\pi/m\}$ is represented by the diagonal matrix:
\begin{equation*}
(e^{\sqrt{-1}((n-1)\beta+\sum_{j=2}^n e_j)\theta}, 1,\dots, 1)\cdot (e^{-\sqrt{-1}e_2\theta}, \dots, e^{-\sqrt{-1}e_{n} \theta}).
\end{equation*}
Since $(n-1)\beta=r$, its CZ index is equal to 
\begin{equation*}
2\frac{1}{m}(r+\sum_{j=2}^n e_j)-\sharp \{j: e_j>0\}=2 \frac{1}{m} (r w_1+\sum_{j=2}^n w_j)-(n-1)+\frac{1}{2}\dim \mathrm{ker}(\phi_{2\pi/m}-\mathrm{Id})
\end{equation*}
where $w_1=1, w_j=e_j, 2\le j\le n$. 
Similarly for $1\le k<m$, the CZ index of $\gamma^k$ is calculated by using the path $\{\phi_\theta, 0\le \theta\le 2\pi\frac{k}{m}\}$ (see \eqref{eq-RSnorm}):
\begin{eqnarray*}
&&2\frac{k}{m}\left(r+\sum_{j=2}^n e_j\right)-\sum_{j: e_j>0}\left(2\left\lfloor \frac{e_j k}{m}\right\rfloor+1\right)=2 \frac{1}{m}\left(k r+\sum_{j: e_j>0}(k e_j\; \mathrm{mod}\, m)\right)-\sharp\{j: e_j>0\}\\
&=&2 \frac{1}{m}(rw_1+\sum_{j=2}^n w_j)-(n-1)+\frac{1}{2}\dim \mathrm{ker}(\phi_{2\pi k/m}-\mathrm{Id}) 
\end{eqnarray*}
where we used the identity $(w_1, w_2, \dots, w_n)=(k, k e_2\, \mathrm{mod}\, m, \dots, k e_n\, \mathrm{mod}\, m)$. When $k=m$, $\lSFT(\gamma^m)$ is $2r-2$.
So we indeed get:
\begin{equation}\label{eq-lSFTapp}
\inf_{1\le k\le m} \mathrm{lSFT}(\gamma^k)=2\min\left\{r,  \frac{1}{m}(r w_1(g)+\sum_{j=2}^n w_j(g)), g\neq \mathrm{id}\in G\right\}-2. 
\end{equation}
When $m > k$, $\lSFT$ can be computed similarly but will be strictly larger than \eqref{eq-lSFTapp} as $r>0$.
Finally for the case when $N\ge 2$, we can trivialize $\gamma^*\xi^{\oplus N}$ by using the following basis vectors:
\begin{equation*}
V'^{(p)}_j=\left\{
\begin{array}{ll}
  e^{\sqrt{-1} N(\beta (n-1)+\sum_{j=2}^n e_j)\theta}V_2,   & \text{if}\;  p=1, j=2 \\
  V_j,   &  \text{otherwise i.e. if } p=1, 3\le j\le n \text{\; or\; } 2 \le p\le N, 2\le j\le n.
\end{array} 
\right.
\end{equation*}
Then a similar calculation gives the same expression as \eqref{eq-lSFTapp}.

\vskip 5mm 
\noindent \textbf{Postscript Note:}
After this paper was posted on arXiv,  Prof.\ Thomas Peternell posted a preprint \cite{Pet24} about a completely different approach (though a direct computation of Chern numbers) to Theorem \ref{thm-main1}. 
In \cite{PL25}, the authors used such an approach to give a proof of Theorem \ref{thm-main1} when the dimension $n\not\equiv 3 \; (\mathrm{mod}\; 4)$.

\bibliographystyle{alpha} 
\bibliography{ref}

\Addresses
\end{document}